\def\MR#1{}
\theoremstyle{plain}
\newtheorem{thm}{Theorem}[section]
\newtheorem{prop}[thm]{Proposition}
\newtheorem{cor}[thm]{Corollary}
\newtheorem{lem}[thm]{Lemma}
\newtheorem{quest}[thm]{Question}
\theoremstyle{definition}
\newtheorem{defn}[thm]{Definition}
\newtheorem{ex}[thm]{Example}
\newtheorem{rem}[thm]{Remark}
\newcommand{\vol}{{\mathrm{vol}}}
\newcommand{\sqp}[1][]{%
\ifthenelse{\isempty{#1}}{\mathcal{P}(\square_2)}{\mathcal{P}^{#1}(\square_2)}%
}
\DeclareMathOperator{\conv}{conv}
\DeclareMathOperator{\md}{md}
\DeclareMathOperator{\MV}{MV}
\newcommand{\R}{{\mathbb{R}}}
\newcommand{\Z}{{\mathbb{Z}}}
\newcommand{\Pm}{{\mathcal{P}}}
\newcommand{\D}{\Delta}
\newcommand{\rleft}{\mathopen{}\mathclose\bgroup\left}
\newcommand{\rright}{\aftergroup\egroup\right}
\newcommand{\set}[1]{\rleft\{ {#1} \rright\}}
\newcommand{\bzero}{\textnormal{\textbf{0}}}
\newcommand{\be}{\textnormal{\textbf{e}}}
\def\coloneqq{\mathrel{\mathop:}=}
\title{Families of lattice polytopes of mixed degree one}
\author[G.\,Balletti]{Gabriele Balletti}
\address[G.\,Balletti]{Department of Mathematics\\Stockholm University\\SE-$106$\ $91$\
  Stockholm\\Sweden}
\curraddr{}
\email{gabriele.balletti@gmail.com}
\thanks{}
\author[C.\,Borger]{Christopher Borger}
\address[C.\,Borger]{Fakult\"at f\"ur Mathematik\\Institut f\"ur Algebra und Geometrie\\Otto-von-Guericke-Universit\"at Magdeburg\\Universit\"atsplatz 2\\ 39106 Magdeburg\\Germany}
\curraddr{}
\email{christopher.borger@ovgu.de}
\thanks{}
\keywords{mixed degree, lattice polytopes, Minkowski sum, mixed volume.}
\begin{document}

\begin{abstract}
It has been shown by Soprunov that the normalized mixed volume 
(minus one) of an $n$-tuple of $n$-dimensional lattice polytopes
is a lower bound for the number of interior lattice points in 
the Minkowski sum of the polytopes. He defined $n$-tuples of
mixed degree at most one to be exactly those for which this
lower bound is attained with equality,
and posed the problem of a classification of such tuples. 
We give a finiteness result regarding this problem in general dimension $n \geq 4$, showing that all but finitely many
$n$-tuples of mixed degree at most one admit a common
lattice projection onto the unimodular simplex $\D_{n-1}$.
Furthermore, we give a complete solution in dimension $n=3$.
In the course of this we show that our finiteness result does
not extend to dimension $n=3$, as we describe infinite families of triples of mixed degree one not admitting a common
lattice projection onto the unimodular triangle $\D_2$.
\end{abstract}

\maketitle{}

%------------------------------------------------------------------------------
\section{Introduction}
\label{sec:introduction}
%------------------------------------------------------------------------------

%\blfootnote{\emph{Key words and phrases:} mixed degree, lattice polytopes, minkowski sum, mixed volume.}

\subsection{Basic definitions}
A \emph{lattice polytope} $P \subset \R^n$ is a polytope $P \subset \R^n$
whose vertices are elements of the lattice $\Z^n \subset \R^n$.
We call two lattice polytopes $P_1,P_2 \subset \R^n$ \emph{equivalent} if 
there exists an affine lattice-preserving transformation $U \colon
\R^n \to \R^n$ satisfying $U(P_1)=P_2$. 
We say that two $n$-tuples $P_1,\dots,P_n \subset \R^n$ and $Q_1,\dots,Q_n \subset \R^n$ 
are \emph{equivalent} if there is a permutation $\sigma \in S_n$, an affine lattice-preserving transformation $U \colon \R^n \to \R^n$ and vectors $t_1,\dots,t_n \in \Z^n$
such that $(P_1,\dots,P_n) = (U(Q_{\sigma(1)})+t_1),\dots,U(Q_{\sigma(n)})+t_n))$. We denote by $\Delta_n = \conv(\bzero,\be_1,\dots,\be_n) \subset \R^n$
the \emph{standard unimodular simplex} in $\R^n$ and call an 
$n$-dimensional simplex \emph{unimodular} if it is equivalent to
$\Delta_n$. 
We write the \emph{Minkowski sum} of two lattice polytopes 
$P_1,P_2 \subset \R^n$ as $P_1 + P_2 = \set{p_1+p_2 \colon p_1 \in P_1, 
p_2 \in P_2} \subset \R^n$ and denote the \emph{interior} of a lattice polytope
$P \subset \R^n$ by $P^{\circ}$. If one has $P^{\circ} \cap \Z^n = \emptyset$, we call the lattice polytope $P \subset \R^n$ \emph{hollow}. 

\subsection{Motivation}
In order to give an explicit definition, let us define the
(normalized) \emph{mixed volume} of an $n$-tuple of polytopes $P_1,\dots,P_n \subset \R^n$ via the inclusion-exclusion formula given
by
$\MV(P_1,\dots,P_n) \coloneqq \sum_{\emptyset \neq I \subseteq \set{1,\dots,n}} (-1)^{n-|I|} \vol_n(\sum_{i \in I} P_i)$, where $\vol_n$ denotes the standard euclidean volume in $\R^n$.
Note that there are various equivalent definitions for the mixed volume of 
an $n$-tuple of lattice polytopes or, more generally, for an $n$-tuple of convex bodies in $\R^n$ (see e.g. \cite{Schn14} or \cite{EG15}) and
that in our definition the mixed volume is normalized such that
$\MV(\Delta_n,\dots,\Delta_n)=1$. 
A central connection of the mixed volume to 
algebraic geometry is given by the famous Bernstein-Kouchnirenko-Khovanskii theorem (\cite{B75}). Combining this theorem with a generalization of the
Euler-Jacobi theorem due to Khovanskii (\cite{K78}) in the context
of sparse polynomial interpolation, Soprunov showed
the following lower bound on the number of interior lattice points
in the Minkowski sum of an $n$-tuple of $n$-dimensional lattice polytopes.

\begin{thm}[\cite{S07,Nil17}]
\label{thm:bound}
Let $P_1,\dots,P_n \subset \R^n$ be $n$-dimensional lattice polytopes. 
Then the following inequality holds:
\begin{align*}
|(P_1+\dots+P_n)^{\circ} \cap \Z^n| \geq \MV(P_1,\dots,P_n)-1.
\end{align*}
Furthermore, equality holds if and only if the Minkowski sum of any
choice of $n-1$ polytopes of the tuple $P_1,\dots,P_n$ is hollow. 
\end{thm}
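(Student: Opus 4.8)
The plan is to translate the statement into the language of sparse polynomial systems and deduce it from the Bernstein--Kouchnirenko--Khovanskii (BKK) theorem together with Khovanskii's toric Euler--Jacobi theorem. First I would choose generic Laurent polynomials $f_1,\dots,f_n \in \mathbb{C}[x_1^{\pm 1},\dots,x_n^{\pm 1}]$ whose Newton polytopes are exactly $P_1,\dots,P_n$ and whose coefficients make the system $f_1=\dots=f_n=0$ non-degenerate. By BKK the zero set $Z \subset (\mathbb{C}^*)^n$ then consists of exactly $N \coloneqq \MV(P_1,\dots,P_n)$ simple points, so the Artinian quotient $R \coloneqq \mathbb{C}[x^{\pm 1}]/(f_1,\dots,f_n)$ is reduced and the evaluation map identifies $R \cong \mathbb{C}^Z$. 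Writing $\Lm \coloneqq (P_1+\dots+P_n)^{\circ}\cap\Z^n$ for the set of interior lattice points and $V$ for the $|\Lm|$-dimensional coefficient space of Laurent polynomials supported on $\Lm$, the whole argument will revolve around the evaluation map $\mathrm{ev}\colon V \to \mathbb{C}^Z$, $g \mapsto (g(\zeta))_{\zeta\in Z}$, for which $|\Lm| = \dim\ker(\mathrm{ev}) + \dim\mathrm{im}(\mathrm{ev})$.

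For the inequality I would invoke Khovanskii's Euler--Jacobi theorem, which states that the toric residue pairing $\langle a,b\rangle \coloneqq \sum_{\zeta\in Z} a(\zeta)b(\zeta)/J(\zeta)$, with $J$ the logarithmic Jacobian of $f_1,\dots,f_n$, is a perfect pairing on $R$, and that every $g$ supported on $\Lm$ satisfies $\langle g,1\rangle = 0$. Thus $\mathrm{im}(\mathrm{ev})$ is contained in the hyperplane orthogonal to the constant function $1$ with respect to the residue weights. The sharper content I would extract from Khovanskii's theorem is that this containment is an equality: interior monomials realize the entire codimension-one annihilator of $1$, so $\dim\mathrm{im}(\mathrm{ev}) = N-1$. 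Together with the dimension formula above this yields $|\Lm| \geq N-1$, which is exactly the claimed bound $|(P_1+\dots+P_n)^\circ\cap\Z^n| \geq \MV(P_1,\dots,P_n)-1$.

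Having pinned down $\dim\mathrm{im}(\mathrm{ev}) = N-1$, the dimension formula turns the equality case into the statement that $|\Lm| = N-1$ if and only if $\ker(\mathrm{ev}) = 0$, i.e.\ if and only if no nonzero Laurent polynomial supported on $\Lm$ vanishes on $Z$. One direction is then immediate and explicit: if some $(n-1)$-fold sum, say $P_2+\dots+P_n$, fails to be hollow, pick $w \in (P_2+\dots+P_n)^\circ\cap\Z^n$. Using the elementary convexity fact $A + \mathrm{int}(B) \subseteq \mathrm{int}(A+B)$ with $A=P_1$ and $B=P_2+\dots+P_n$, every monomial of $x^w f_1 = \sum_{u} c_u\, x^{w+u}$ lies in $\Lm$, so $x^w f_1 \in V$; since $f_1$ vanishes on $Z$ so does $x^w f_1$, producing a nonzero element of $\ker(\mathrm{ev})$. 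Hence a non-hollow $(n-1)$-fold sum forces $|\Lm| > N-1$, which proves that equality implies hollowness of all $(n-1)$-fold sums.

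The converse is where I expect the real work to lie. I would need to show that if every $(n-1)$-fold Minkowski sum is hollow, then no nonzero $g$ with support in $\Lm$ lies in the ideal $(f_1,\dots,f_n) = \Jm(Z)$. Writing such a $g = \sum_j h_j f_j$, the difficulty is that the cofactors $h_j$ can have arbitrarily large Newton polytopes, so naive support bookkeeping fails because of cancellation. The plan is to pass to the projective toric variety $X$ associated with $P_1+\dots+P_n$, interpret the $f_j$ as sections of the nef line bundles corresponding to the $P_j$ and the interior monomials as sections of the canonical sheaf, and reinterpret $\ker(\mathrm{ev})$ via Serre duality against $\mathrm{im}(\mathrm{ev})$. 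In this language the hollowness of each $(n-1)$-fold sum becomes the vanishing of the interior lattice points of the corresponding smaller Minkowski sum, and I would leverage this to obtain the cohomological vanishing that forces $\ker(\mathrm{ev})=0$. Controlling these supports and the associated toric cohomology, rather than the BKK and Euler--Jacobi inputs, is the main obstacle.
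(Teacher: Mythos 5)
The paper does not prove Theorem~\ref{thm:bound} at all --- it is imported from \cite{S07,Nil17}, with only the remark that Soprunov obtained it by combining the Bernstein--Kouchnirenko--Khovanskii theorem with Khovanskii's generalization of the Euler--Jacobi theorem. Your framework (generic sparse system, evaluation of interior monomials at the zero set, the logarithmic-Jacobian residue functional) is exactly that route, and your argument for one direction of the equality statement is correct and complete: if $w$ is an interior lattice point of $P_2+\dots+P_n$, then $P_1+(P_2+\dots+P_n)^{\circ}\subseteq (P_1+\dots+P_n)^{\circ}$ puts $x^w f_1$ in the kernel of the evaluation map, forcing strict inequality. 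However, your derivation of the main inequality hinges on the assertion $\dim\mathrm{im}(\mathrm{ev})=N-1$, and the Euler--Jacobi vanishing only yields the containment $\mathrm{im}(\mathrm{ev})\subseteq 1^{\perp}$, i.e.\ the upper bound $\dim\mathrm{im}(\mathrm{ev})\leq N-1$ --- the wrong direction. The reverse inclusion (that interior monomials interpolate \emph{every} vector annihilated by the residue functional) is precisely the substance of Soprunov's theorem; it is not a formal consequence of the vanishing statement, and as written it is asserted rather than proved, so the inequality is not actually established.

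The second, explicitly acknowledged gap is the converse of the equality characterization: you do not show that hollowness of every $(n-1)$-fold sub-sum forces $\ker(\mathrm{ev})=0$, and you describe the toric Serre-duality attack you propose as the main obstacle. This direction in fact needs none of that machinery. Multivariate Ehrhart reciprocity gives, for $n$-dimensional summands, the identity
\begin{align*}
\sum_{\emptyset\neq I\subseteq\{1,\dots,n\}}(-1)^{n-|I|}\,\Bigl|\Bigl(\sum_{i\in I}P_i\Bigr)^{\circ}\cap\Z^n\Bigr| \;=\; \MV(P_1,\dots,P_n)-1,
\end{align*}
and since a sub-sum of a hollow Minkowski sum of full-dimensional lattice polytopes is again hollow (an interior lattice point of $\sum_{i\in I}P_i$ plus a vertex of the remaining summands is an interior lattice point of the larger sum), hollowness of all $(n-1)$-fold sums annihilates every proper term of the identity and leaves exactly $|(P_1+\dots+P_n)^{\circ}\cap\Z^n|=\MV(P_1,\dots,P_n)-1$. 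This combinatorial argument, essentially the route of \cite{Nil17}, closes the converse; combined with your $x^w f_1$ argument it settles the whole equality statement, but as it stands your proposal proves neither the inequality nor the converse direction.
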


The $n$-tuples for which equality holds in the above theorem
have been called $n$-tuples \emph{of mixed degree at most one} by Soprunov in 
\cite{BNR08}, where a characterization of such tuples has 
been posed as a problem (\cite[Section~5, Problem~2]{BNR08}). This 
notion is motivated by a connection to the \emph{degree}
of a lattice polytope, which is
an intensively studied invariant in Ehrhart Theory (see for example
\cite{BN07,DdRP09,dRHN11,B18}). The degree $\deg(P)$
of an $n$-dimensional lattice polytope $P \subset \R^n$ is set to 
equal $n$
if $P$ has at least one interior lattice point, and otherwise is defined as the
smallest integer $0 \leq d \leq n-1$ such that the
dilated lattice polytope $(n-d)P$ is hollow. Another 
interpretation is given by the fact that 
$\deg(P)$ agrees with the degree of the so-called $h^*$-polynomial of $P$
(see for example \cite{BR15}). 
Now in the setting
$P_1=\dots=P_n=P \subset \R^n$, that is for an $n$-tuple consisting of
$n$ copies of the same lattice polytope $P$, the equality condition 
from Theorem~\ref{thm:bound} is satisfied if and only if the degree
of $P$ is at most one. It is a well-known fact that an
$n$-dimensional lattice 
polytope has degree $0$ if and only if it is equivalent to $\Delta_n$.
Also for lattice polytopes of degree one there exists the following
complete description by Batyrev-Nill~\cite{BN07}. Given an $n$-dimensional lattice polytope $Q \subset \R^n$, we define the \emph{lattice pyramid} $\Pm(Q)$ as the
$(n+1)$-dimensional polytope
\[
\Pm(Q) \coloneqq \conv(Q \times \{\bzero\} \cup \{\be_{n+1}\}) \subset \R^{n+1}. 
\]
The lattice pyramid construction preserves the degree (it actually preserves the $h^*$-polynomial). 
We say that an $n$-dimensional lattice polytope is an \emph{exceptional
    simplex} if it is equivalent to the polytope obtained via $n-2$ iterations of the lattice pyramid construction over the 
polygon $2 \Delta_2$.
We say that an $n$-dimensional lattice polytope $P$ is a \emph{Lawrence prism} if $P$ is equivalent to a lattice polytope $\conv (\{ \bzero , a_0 \be_n , \be_1 , \be_1 + a_1 \be_n , \ldots ,
\be_{n-1}, \be_{n-1} + a_{n-1} \be_n  \})$ for nonnegative integers 
$a_0, \ldots , a_{n-1} \in \Z_{\geq 0}$.

\begin{thm}[{\cite[Theorem~2.5]{BN07}}]
    \label{thm:BN}
    Let $P$ be a lattice polytope. Then $\deg(P) \leq 1$
    (i.e. $(n-1)P$ is hollow) if and only if $P$ is
    is an exceptional simplex or a Lawrence prism.
\end{thm}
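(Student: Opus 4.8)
The plan is to prove the two implications separately, treating the construction of the two families as the routine direction and the classification as the substantial one. For the ``if'' direction I would use that, as recalled above, the lattice pyramid construction preserves the degree, so it suffices to check the base objects of each family. The exceptional simplices are iterated pyramids over $2\Delta_2$, and $\deg(2\Delta_2)=1$ since $2\Delta_2$ is a non-unimodular triangle with no interior lattice point. For a Lawrence prism $P=\conv\{\bzero, a_0\be_n, \be_1, \be_1+a_1\be_n, \ldots\}$ I would use the coordinate projection $\pi\colon\R^n\to\R^{n-1}$ forgetting the last coordinate, which is a lattice surjection with $\pi(P)=\Delta_{n-1}$. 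Since a surjective linear map carries the interior of a full-dimensional polytope into the interior of its image, any interior lattice point of $(n-1)P$ would map to a lattice point in the interior of $(n-1)\Delta_{n-1}$; but $k\Delta_m$ has an interior lattice point only for $k\geq m+1$, so $(n-1)\Delta_{n-1}$ is hollow. Hence $(n-1)P$ is hollow and $\deg P\leq 1$.

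For the converse I would induct on $n=\dim P$. In dimensions $n\leq 2$ the statement is the classical classification of lattice polygons without interior lattice points, which are exactly the Lawrence prisms (trapezoids of lattice width one) together with the single exceptional triangle $2\Delta_2$. For the inductive step I would first observe that both target families are closed under the pyramid operation: a pyramid over an exceptional simplex is again an exceptional simplex, while a pyramid over a Lawrence prism is a Lawrence prism with one additional zero-length fiber $a_i=0$. Thus if $P$ is a lattice pyramid over some $Q$, then $\deg Q=\deg P\leq 1$ with $\dim Q=n-1$, and the inductive hypothesis applied to $Q$ finishes the case. Since every exceptional simplex of dimension at least $3$ is by definition a lattice pyramid, this reduction disposes of all exceptional simplices, and it remains to treat a $P$ that is not a lattice pyramid with $n\geq 3$, where I must show that $P$ is a Lawrence prism.

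The heart of the argument is to produce, for such a non-pyramidal $P$, a lattice projection $\pi\colon\R^n\to\R^{n-1}$ with $\pi(P)=\Delta_{n-1}$. Once this is done, $P$ is a Cayley polytope over $\Delta_{n-1}$ whose fibers over the $n$ vertices are one-dimensional by a dimension count, hence are lattice segments, so $P$ is a Lawrence prism and we are finished. To build $\pi$ I would exploit the hollowness of $(n-1)P$: the codegree $\codeg(P)=n$ is large relative to the dimension, and this is precisely the regime in which a lattice polytope is forced to admit a Cayley decomposition. The non-pyramid hypothesis should then rule out the Cayley structures with a lower-dimensional base and a ``fat'' fiber — exactly the structures realized by the pyramids over $2\Delta_2$ — and so force the base of the decomposition to be the full simplex $\Delta_{n-1}$.

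I expect the main obstacle to be exactly this structural step: showing that hollowness of $(n-1)P$ forces a Cayley decomposition over $\Delta_{n-1}$, and that the non-pyramid hypothesis pins down the base dimension. I would attack it by a careful lattice-width analysis, locating a unimodular simplex spanned by $n$ vertices of $P$ and using the absence of interior lattice points in $(n-1)P$ to confine every remaining lattice point of $P$ to a segment lying over a single vertex of that simplex. The borderline nature of the codegree bound in dimension $n=2$ is what allows the exceptional family to escape this confinement in low dimension, and it explains why the exceptional simplices enter the classification only through the two-dimensional base case.
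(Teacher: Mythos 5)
First, note that the paper does not prove this statement at all: it is quoted directly from Batyrev--Nill \cite{BN07}, so there is no internal argument to compare yours against; what you have written is a blind attempt at the cited theorem itself.

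Your ``if'' direction is correct (pyramids preserve degree, $2\Delta_2$ is hollow, and a Lawrence prism projects onto $\Delta_{n-1}$ whose $(n-1)$-st dilate is hollow), and your reduction of the converse to the non-pyramid case via induction on dimension is also sound: both families are closed under $\Pm$, the base case $n\leq 2$ is the classical classification of hollow polygons, and every exceptional simplex of dimension at least $3$ is a pyramid. The genuine gap is the step you yourself flag as the ``main obstacle'': showing that a non-pyramidal $P$ with $(n-1)P$ hollow admits a lattice projection onto $\Delta_{n-1}$. This is not a technical detail to be filled in later --- it \emph{is} the content of Batyrev--Nill's theorem, and your proposal only restates it as a goal together with a heuristic plan (``locate a unimodular simplex spanned by vertices and confine the remaining lattice points to fibers''). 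That plan is not obviously executable as stated: it is not clear that a polytope of degree one even contains a unimodular $n$-simplex spanned by its vertices (consider $\conv(\bzero,\be_1,\dots,\be_{n-1},2\be_n)$, which happens to be a pyramid, but illustrates that the existence of such a simplex needs an argument), and the confinement step is where all the work lies. Moreover, your appeal to ``the regime in which a lattice polytope is forced to admit a Cayley decomposition'' is dangerously close to circular: the general codegree-implies-Cayley theorems (Haase--Nill--Payne, Dickenstein--Nill) postdate and generalize precisely this degree-one classification, which is their prototype. A complete proof along the lines of \cite{BN07} instead uses monotonicity of the degree under taking subpolytopes to control all simplices spanned by lattice points of $P$, and derives the width-one structure from that; some such concrete mechanism must replace the paragraph where you currently only announce the strategy.
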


The relation of tuples of mixed degree at most one to the degree
of a lattice polytope raises the natural question whether there is a general
concept of a mixed degree of an $n$-tuple of lattice polytopes in $\R^n$ 
that generalizes both Soprunov's definition of tuples of mixed degree
at most one and the degree of a single lattice polytope. 
A suggestion for such a mixed degree has recently been
given by Nill \cite{Nil17}.
Let $P_1,\dots,P_n \subset \R^n$ be an $n$-tuple of lattice polytopes.
The \emph{mixed degree} $\md(P_1,\dots,P_n)$ is set to equal $n$ if 
$P_i$ has an interior lattice point for some $1 \leq i \leq n$.
Otherwise $\md(P_1,\dots,P_n)$ is the 
smallest integer $0 \leq d \leq n-1$ such that  
the Minkowski sum of any choice of $(n-d)$ polytopes of tuple
$P_1,\dots,P_n$ is hollow. We refer the reader to \cite{Nil17} for 
additional motivation for this definition.

In this language, Soprunov's problem asks for a 
characterization of $n$-tuples of lattice polytopes 
$P_1,\dots,P_n \subset \R^n$ satisfying $\md(P_1,\dots,P_n) \leq 1$. 
The case of $\md(P_1,\dots,P_n)=0$ (as this is equivalent to 
$\MV(P_1,\dots,P_n)=1$ by \cite[Theorem 2.2]{Nil17}) has already been solved by 
Cattani et al. in the context of investigating the codimension
of so-called mixed discriminants.

\begin{prop}[{\cite[Proposition~2.7]{CCD13}}]
	\label{prop:ccd}
    Let $P_1,\ldots,P_n$ be $n$-dimensional lattice polytopes.
    Then $\md(P_1,\ldots,P_n)=0$ if and only if the $n$-tuple
    $P_1,\ldots,P_n$ is equivalent to the $n$-tuple
    $\Delta_n,\dots,\Delta_n$.
\end{prop}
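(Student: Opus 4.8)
The plan is to use the equivalence $\md(P_1,\dots,P_n)=0 \iff \MV(P_1,\dots,P_n)=1$ and to prove the statement in the form: for $n$-dimensional lattice polytopes, $\MV(P_1,\dots,P_n)=1$ if and only if the $P_i$ are all translates of one common unimodular simplex. The direction starting from $(\Delta_n,\dots,\Delta_n)$ is immediate, since the normalized mixed volume is invariant under the permutations, lattice-preserving affine maps and translations defining equivalence, and $\MV(\Delta_n,\dots,\Delta_n)=1$. So the entire content lies in the converse.

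First I would reduce every $P_i$ to a unimodular simplex. Recall that for $n$-dimensional lattice polytopes the normalized mixed volume is a positive integer that is monotone under inclusion: each $P_i$ contains a lattice edge, these can be chosen with linearly independent directions $u_i$ by a transversal argument (the edge directions of a full-dimensional polytope span $\R^n$), and then $\MV(P_1,\dots,P_n) \ge \MV([0,u_1],\dots,[0,u_n]) = \lvert\det(u_1,\dots,u_n)\rvert \ge 1$. The iterated Alexandrov--Fenchel inequality gives $\MV(P_1,\dots,P_n)^n \ge \prod_{i=1}^n \MV(P_i,\dots,P_i)$, where $\MV(P_i,\dots,P_i)$ is the normalized volume of $P_i$, an integer $\ge 1$ that equals $1$ exactly when $P_i$ is a unimodular simplex. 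Under $\MV(P_1,\dots,P_n)=1$ the right-hand side is at most $1$ while each factor is $\ge 1$; hence each $\MV(P_i,\dots,P_i)=1$ and each $P_i$ is a unimodular simplex. Only the \emph{inequality} form of Alexandrov--Fenchel is used here, so this step is routine.

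The remaining and main task is to upgrade ``each $P_i$ is a unimodular simplex'' to ``all $P_i$ are translates of a common one''. After one lattice transformation I may assume $P_1=\Delta_n$. The key tool is the reduction of mixed volume by a segment: for a primitive $w$ and the projection $\pi_w$ onto $\Z^n/\Z w \cong \Z^{n-1}$ one has $\MV(Q_1,\dots,Q_{n-1},[0,w]) = \MV(\pi_w Q_1,\dots,\pi_w Q_{n-1})$. Peeling off $n-2$ of the simplices along independent primitive edge directions $w_3,\dots,w_n$ (again a transversal argument) and using monotonicity, I reduce a fixed pair, say $(P_1,P_2)$, to a planar identity $\MV(\pi_V P_1,\pi_V P_2)=1$ in the rank-$2$ quotient modulo $V=\lspan(w_3,\dots,w_n)$. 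In the plane the statement is clean: applying the reduction of Step~2 again forces both projections to be unimodular triangles, and the equality case of Brunn--Minkowski forces them to be \emph{positively} homothetic, hence, having equal area, translates of each other. This is orientation-aware and rules out reflections, which a mere comparison of widths would miss.

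The hard part is the lifting. I now know that for every codimension-$2$ subspace $V$ that arises as the span of a transversal of edge directions of the other simplices, the projections $\pi_V(P_1)$ and $\pi_V(P_2)$ are genuine translates, i.e.\ the difference of support functions $h_{P_2}-h_{P_1}$ is linear on the $2$-plane $V^\perp$; I must deduce that it is linear on all of $\R^n$, which yields $P_2=\Delta_n+t$, and then conclude by symmetry over all pairs. If \emph{all} $2$-planes could be realised this way the conclusion would be automatic, since a positively homogeneous function that is linear on every $2$-dimensional subspace is additive, hence linear; but only finitely many $V$ occur, so the real work is a combinatorial argument that the edge-vector configurations of the unimodular simplices must coincide once enough of their projections agree. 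I expect this matching of edge configurations --- essentially the equality case of Alexandrov--Fenchel specialised to lattice simplices --- to be the crux. An appealing alternative is the Bernstein--Kouchnirenko viewpoint, where $\MV=1$ says a generic sparse system has a unique torus solution, which should force the system to be monomially equivalent to a linear one and the $P_i$ to be translates of $\Delta_n$.
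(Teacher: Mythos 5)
The paper does not prove this proposition at all: it is imported verbatim from \cite[Proposition~2.7]{CCD13}, so there is no internal argument to compare yours against. Judged on its own terms, your proposal is not yet a proof. The first two stages are sound: positivity of the mixed volume via a transversal choice of edge directions, the inequality $\MV(P_1,\dots,P_n)^n \ge \prod_i \Vol(P_i)$ from Alexandrov--Fenchel to force each $P_i$ to be a unimodular simplex, and the planar case via the equality condition in Minkowski's first inequality (homothety plus equal area giving translates) are all correct and standard. The problem is exactly where you say it is: the lifting. Knowing that $\pi_V(P_1)$ and $\pi_V(P_2)$ are translates for the finitely many codimension-$2$ subspaces $V$ spanned by admissible edge transversals of $P_3,\dots,P_n$ does not by itself make $h_{P_2}-h_{P_1}$ linear on $\R^n$, and you offer no argument that enough such $V$ exist or that the edge configurations must match --- you only state that you ``expect'' this to work. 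Note also that the available planes $V^\perp$ depend on $P_3,\dots,P_n$, about which at this stage you only know that they are unimodular simplices, so the supply of usable projections could in principle be very thin. The Bern\-stein--Kouchnirenko ``alternative'' has the same status: that a generic sparse system with a single torus root must be monomially equivalent to a linear system is essentially a restatement of the proposition, not a proof of it.

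In short: the reduction to unimodular simplices and the two-dimensional equality case are fine, but the crux --- upgrading agreement of finitely many rank-$2$ projections to the simplices being translates of a common $\Delta_n$ --- is explicitly left open, so the proposal has a genuine gap. If you want to complete it along these lines you need either a concrete combinatorial argument on the edge vectors of the unimodular simplices (e.g.\ inductively showing that every edge direction of $P_2$ is an edge direction of $P_1$ by choosing the transversal appropriately), or you should simply cite \cite{CCD13} as the paper does.
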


We therefore often restrict to tuples with mixed degree equal to one
in our approach towards solving Soprunov's problem.

\subsection{Results}

The contribution of this paper is to partially solve Soprunov's problem by presenting a finiteness result for dimension $n \geq 4$ and to give a complete characterization of 
triples of $3$-dimensional lattice polytopes of mixed degree one. 
 
In order to describe a trivial class of $n$-tuples of mixed degree 
(at most) one, let us introduce the concept of lattice projections.
By \emph{lattice projection}, we denote a surjective affine-linear
map $\varphi \colon \R^n \to \R^m$ satisfying $\varphi(\Z^n)=\Z^m$.
The kernel of such a projection is affinely generated by lattice points
of $\Z^n$ and we consider two projections to be equal if and only if they
have the same kernel up to lattice translations.

The trivial class of $n$-tuples of mixed degree (at most) one 
is now given by the following example.

\begin{ex}
    \label{ex:lawrence}
    Let $P_1,\dots,P_n \subset \R^n$ be $n$-dimensional lattice
    polytopes and $\varphi \colon \R^n \to \R^{n-1}$ a
    lattice projection satisfying
    $\varphi(P_i)=\Delta_{n-1}$ for all $1 \leq i \leq n$. Then $P_1,\ldots,P_n$ has mixed degree at most
    one, as any Minkowski sum of $n-1$ polytopes of the
    tuple $P_1,\ldots,P_n$
    is projected onto the hollow polytope $(n-1)\Delta_{n-1}
    \subset \R^{n-1}$ by $\varphi$.
    An example of such a trivial tuple in dimension $n=3$ is shown in Figure~\ref{fig:example_trivial}.
\end{ex}

\begin{figure}[!ht]
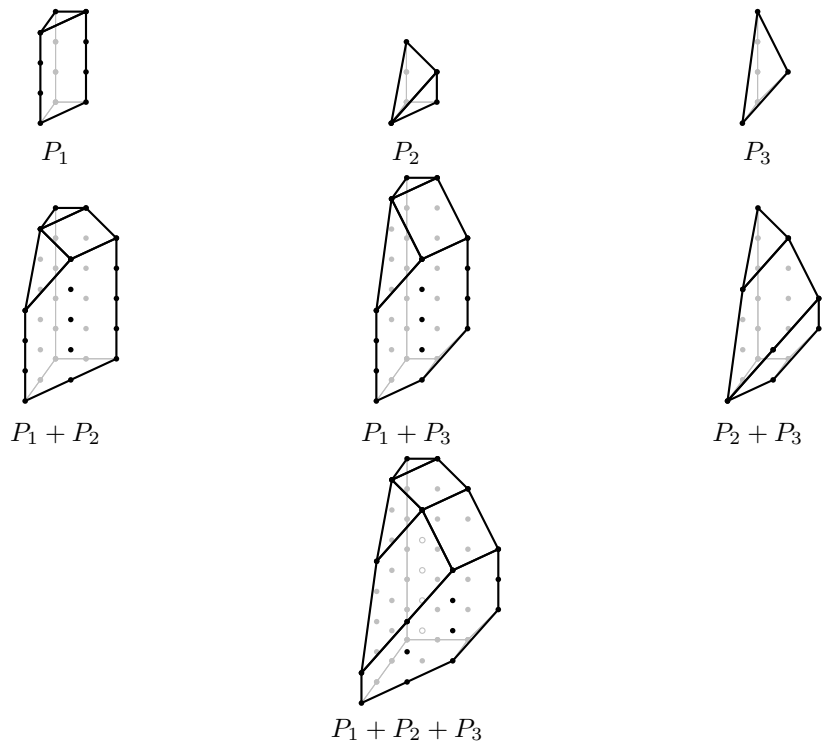

    \centering
    \begin{minipage}[b]{0.3\linewidth}
        \centering
        % [inline block 0: 7 envs, 42605 chars -> data_tex | \begin{tikzpicture}[scale=.4]         \draw[lightgray] (0.000000000000000, 0.000000000000000) -- (0.000000000000000, 3.0...]

    \end{minipage}
    \caption{A triple $P_1,P_2,P_3 \subset \R^3$ 
    having mixed degree one, where $P_1,P_2,P_3$ all project
    onto $\D_2$ under the projection along the vertical
    axis.}
    \label{fig:example_trivial}
\end{figure}

One can view $n$-tuples from Example~\ref{ex:lawrence} as 
consisting of $n$ lattice polytopes that all are Lawrence prisms
and additionally satisfy that they extend into the same 
height-direction over the same unimodular $(n-1)$-dimensional
simplex. 
Clearly we cannot expect this to be the only
class of $n$-tuples of mixed degree one, as already the 
unmixed setting of Theorem~\ref{thm:BN} additionally yields 
$n$-tuples of copies of the same exceptional simplex as
having mixed degree one. 
Unlike in the unmixed case there actually exist many more
such non-trivial examples
(see our classification result for $n=3$ in Theorem~\ref{thm:classification3d}, one example is shown in Figure~\ref{fig:example}).

\begin{figure}[!ht]
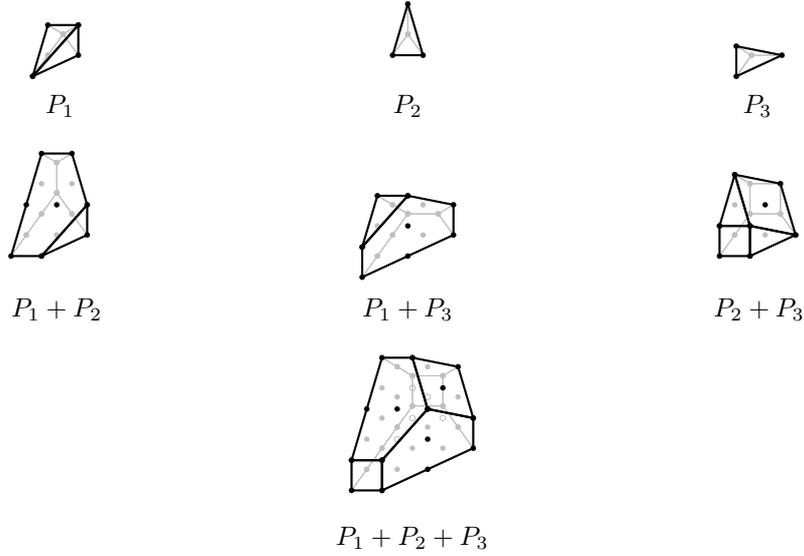

    \centering
    \begin{minipage}[b]{0.3\linewidth}
        \centering
        % [inline block 1: 7 envs, 34091 chars -> data_tex | \begin{tikzpicture}[scale=.4]         \draw[lightgray] (0.500000000000000, 0.700000000000000) -- (-0.500000000000000, -0...]

    \end{minipage}
    \caption{A triple $P_1,P_2,P_3 \subset \R^3$ 
    having mixed degree one for which no lattice 
    projection exists commonly mapping $P_1,P_2,P_3$ onto 
    translates of $\D_2$ (see \ref{itm:max_triple_picture} of
    Corollary~\ref{thm:maximal}).}
    \label{fig:example}
\end{figure}

This raises the question whether there is any chance to 
make reasonable statements about $n$-tuples of mixed
degree one at all. Our main result is to provide a positive
answer to this question by showing that, for any dimension $n$ 
at least $4$, all but finitely many exceptions of $n$-tuples
of mixed degree one are actually of the trivial type
described in Example~\ref{ex:lawrence}.

%\newpage

\begin{thm}
\label{thm:main}
Fix $n \geq 4$ and let $P_1,\dots,P_n \subset \R^n$ be $n$-dimensional lattice polytopes
with $\md(P_1,\dots,P_n)=1$. Then, up to equivalence, the $n$-tuple $P_1,\dots,P_n$ either belongs to a finite list 
of exceptions or there is a lattice projection $\varphi \colon \R^n \to \R^{n-1}$ such that
$\varphi(P_i) = \D_{n-1}$ for all $1 \leq i \leq n$.
\end{thm}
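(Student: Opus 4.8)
The plan is to convert the hypothesis $\md(P_1,\dots,P_n)=1$ into a statement about \emph{all} proper sub-sums and then run a lattice-width argument. First I would record the propagation principle: if some sum $\sum_{j\in S}P_j$ with $|S|\le n-1$ had an interior lattice point $q$, then adding lattice vertices of the missing summands until the index set has size exactly $n-1$ would produce an interior lattice point of a Minkowski sum of $n-1$ of the $P_j$, contradicting mixed degree one. Hence every Minkowski sum of at most $n-1$ of the $P_j$ is hollow; in particular each $P_i$ is hollow. The engine of everything that follows is the additivity of lattice width, $\width_u(\sum_j P_j)=\sum_j\width_u(P_j)$ for each primitive covector $u$, combined with the elementary fact that a full-dimensional lattice polytope has $\width_u(P_j)\ge 1$ for every primitive $u$. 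Consequently every $(n-1)$-fold sum $Q_i\coloneqq\sum_{j\ne i}P_j$ satisfies $\width_u(Q_i)\ge n-1$, with equality in a direction $u$ \emph{if and only if} $\width_u(P_j)=1$ for all $j\ne i$.

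Since each $Q_i$ is hollow, the flatness theorem bounds its lattice width by a constant $\omega_n$ depending only on $n$. I would use this to isolate the unbounded behaviour. Each $P_i$ is itself hollow, so it lies in a slab of bounded lattice width; the only way a tuple can escape to infinity up to equivalence is for some $P_i$ to be long in a fibre direction transverse to its minimal-width slab, i.e.\ to be prism-like. This suggests the dichotomy to prove: either the combinatorial ``base'' data of all the $P_i$ is bounded, in which case finiteness up to equivalence follows from the standard fact that lattice polytopes of bounded width and bounded base in a fixed dimension fall into finitely many equivalence classes; or some summand is genuinely unbounded, which by the width identity above forces a direction $u$ with $\width_u(Q_i)=n-1$, hence $\width_u(P_j)=1$ for all $j\ne i$.

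The heart of the argument is to upgrade such width-$1$ directions into a single lattice projection $\varphi\colon\R^n\to\R^{n-1}$ with $\varphi(P_i)=\D_{n-1}$ simultaneously for all $i$, and this is where $n\ge 4$ enters. A single width-$1$ covector only records a Cayley-type splitting $P_j=\conv(A_j\cup B_j)$ into two adjacent hyperplane slices; to produce the projection one needs all $n$ facet normals of $\D_{n-1}$, that is $n$ common width-$1$ covectors annihilating one lattice line $w$ and summing to zero, so these must be manufactured and synchronized across all indices. The condition $n-1\ge 3$ is what makes the synchronization possible: a full-dimensional lattice polytope is width-$1$ in only finitely many directions, and once at least three summands are simultaneously width-$1$ for two a priori different covectors, those covectors are forced into the facet-normal configuration of a common unimodular simplex. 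Finally, once a common $\varphi$ with full-dimensional images $R_i\coloneqq\varphi(P_i)$ is in hand, the target is pinned down automatically: the common image $R$ inherits the property that $(n-1)R$ is hollow, so $\codeg(R)=n$ and hence $R$ has degree $0$, forcing $R\cong\D_{n-1}$ by the well-known characterization of degree-zero polytopes. This is exactly the converse mechanism to Example~\ref{ex:lawrence}.

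The main obstacle is precisely this synchronization step: extracting one projection that works for every $i$ and whose common image is the \emph{unimodular} simplex, rather than an unrelated, direction-by-direction Cayley structure, and then verifying that hollowness of the $Q_i$ genuinely descends to the images so that the codegree computation applies. Controlling which configurations resist synchronization, and checking that they form a bounded (hence finite) family, is where the bulk of the case analysis will lie. It is also the precise point at which $n\ge 4$ is indispensable: for $n=3$ each $Q_i$ is a sum of only $n-1=2$ polytopes, so the overlaps between the minimal-width directions of different $Q_i$ are single summands, too small to force a common direction, and the infinite families announced in the abstract slip through.
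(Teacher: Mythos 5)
There is a genuine gap: what you have written is a strategy outline whose two load-bearing steps are left unproven, and the quantitative mechanism you propose for the first of them does not actually deliver what you claim. Concretely, from hollowness of $Q_i=\sum_{j\ne i}P_j$ the flatness theorem gives $\width(Q_i)\le \omega_n$ for a dimensional constant $\omega_n$, while full-dimensionality gives $\width(Q_i)\ge n-1$; but $\omega_n>n-1$ already for $n=3$, so nothing forces the equality $\width_u(Q_i)=n-1$ that you need to conclude $\width_u(P_j)=1$ for all $j\ne i$ (a minimal-width direction could split as $2+1+\dots+1$, say). Likewise, ``bounded width implies finitely many equivalence classes'' is false --- all hollow polytopes have width at most $\omega_n$ yet form infinitely many classes --- so your ``bounded base data'' branch has no finiteness mechanism. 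The paper replaces both of these by a different pair of inputs: the Nill--Ziegler theorem (all but finitely many hollow $n$-polytopes admit a lattice projection onto a \emph{hollow} $(n-1)$-polytope), combined with Theorem~\ref{thm:bound} in dimension $n-1$ and Proposition~\ref{prop:ccd}, which upgrades ``$\varphi(Q_i)$ hollow'' to ``$\MV(\varphi(P_j))_{j\ne i}=1$'' and hence to ``every $P_j$, $j\ne i$, projects onto a translate of $\Delta_{n-1}$.'' That is far stronger than the width-$1$ covector data you extract, and it is what makes the rest of the argument tractable. Note also that hollowness does \emph{not} descend along an arbitrary lattice projection (the fibre over an interior lattice point of the image need not contain a lattice point), which is precisely why the paper has to introduce the exceptional/non-exceptional dichotomy rather than argue as in your final ``codegree'' step.

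The second, and central, step --- synchronizing the directions across all $n$ indices into a single projection $\varphi$ with $\varphi(P_i)=\D_{n-1}$ for every $i$ --- is exactly what you flag as the main obstacle and then do not carry out. Your assertion that three summands sharing two width-$1$ covectors are ``forced into the facet-normal configuration of a common unimodular simplex'' is not true at the level of width-$1$ covectors (the pyramid $\Pm(\square_2)$ has many width-$1$ directions not fitting a single simplex), and even with the stronger hypothesis of actual projections onto $\D_{n-1}$ the paper needs three dedicated lemmas (a polytope with two such projections is $\D_n$ or $\Pm^{n-2}(\square_2)$, and with three it is $\D_n$; an intersection-of-prisms rigidity statement; and a compatibility lemma for unimodular simplices), plus a finite computer check in the boundary case $n=4$, plus separate volume-bound arguments (via \cite{LZ91}) when two or more $(n-1)$-subtuples are exceptional. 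None of this case analysis is present or replaced by an alternative argument in your proposal, so as it stands the proof is not complete.
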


We refer the reader to Section~3 for the proof of Theorem~\ref{thm:main}.

Theorem~\ref{thm:main} is not true for dimension $n \in \set{2,3}$. This
fact is straightforward to see for $n=2$, as pairs of lattice polygons
$P_1,P_2 \subset \R^2$ are of mixed degree (at most) one if and only if
both $P_1$ and $P_2$ are hollow. Fixing $P_1$ to be any hollow polygon and 
letting $P_2$ range through all polygons that are equivalent to a fixed
hollow polygon will clearly yield infinitely many non-equivalent pairs
of mixed degree one without there being a projection
commonly mapping both
polytopes onto the segment $\Delta_1$. 

For $n=3$, however, we find that only a very specific class of triples
of mixed degree one contains an infinite number of exceptions and we
can explicitly describe a finite number of $1$-parameter families 
covering this class. This is part of the following classification
result, which essentially gives a complete answer
to Soprunov's problem for dimension $n=3$. 
Note that we say that a 
$k$-tuple of $n$-dimensional lattice polytopes $P_1,\dots,P_k$ 
\emph{admits a common lattice projection} onto translates of an
$(n-1)$-dimensional lattice polytope $Q$ if there exists a lattice 
projection $\varphi \colon \R^n \to \R^{n-1}$ satisfying 
$\varphi(P_i)=Q+t_i$ for all $1 \leq i \leq k$ and some $t_i \in \Z^{n-1}$.

\begin{thm}
	\label{thm:classification3d}
    Let $P_1,P_2,P_3 \subset \R^3$ be a triple of $3$-dimensional lattice polytopes that satisfies $\md(P_1,P_2,P_3)=1$. Then either there is a lattice projection
    $\varphi \colon \R^3 \to \R^{2}$ such that $\varphi(P_i) = \D_{2}$ for all $1 \leq i \leq 3$,
    or one of the following holds.
    \begin{enumerate}
        \item There is no pair in $P_1,P_2,P_3$ admitting a common lattice projection
        onto translates of $\Delta_2$ and $P_1,P_2,P_3$
        is equivalent to one out of $29$ possible triples.
        \item There is exactly one pair in $P_1,P_2,P_3$ 
        admitting a common lattice
        projection onto translates of $\Delta_2$ and
        $P_1,P_2,P_3$ is equivalent to one out of $141$ possible triples.
        \item There are exactly two pairs in $P_1,P_2,P_3$ 
        admitting
        a common lattice
        projection onto translates of $\Delta_2$ and
        $P_1,P_2,P_3$ is equivalent to one out of $82$ possible triples.
        \item
		\label{item:infinite_families}        
        All pairs in $P_1,P_2,P_3$ admit a common lattice
        projection onto translates of $\Delta_2$ and
        \begin{enumerate}
       	\item the kernels of the projections cannot be shifted into a common
       	hyperplane and the triple
       	 $P_1,P_2,P_3$ is equivalent to one out of 
       	$27$ possible triples.
       	\item the kernels of the projections can be shifted into a common
       	hyperplane and $P_1,P_2,P_3$ belongs, up to equivalence, to one out of
       	finitely many infinite 1-parameter families of triples.
       	\end{enumerate}
    \end{enumerate}
\end{thm}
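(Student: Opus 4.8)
The plan is to convert the hypothesis $\md(P_1,P_2,P_3)=1$ into a statement about the three pairwise Minkowski sums and then to organise the proof around how many of these sums are of the trivial type from Example~\ref{ex:lawrence}. By the definition of mixed degree, $\md(P_1,P_2,P_3)\le 1$ forces every pairwise sum $S_{12}=P_1+P_2$, $S_{13}=P_1+P_3$, $S_{23}=P_2+P_3$ to be hollow and each $P_i$ to have empty interior; excluding $\md=0$ removes, by Proposition~\ref{prop:ccd}, only the triple $(\Delta_3,\Delta_3,\Delta_3)$ and amounts to $P_1+P_2+P_3$ being non-hollow. In particular each $P_i$ is a hollow $3$-polytope. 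Since each $P_i$ is $3$-dimensional, its lattice width in every direction is at least $1$, and lattice widths add under Minkowski sums, so each $S_{ij}$ is a hollow lattice $3$-polytope of width at least $2$. I would then invoke the classification of hollow lattice $3$-polytopes, by which such a polytope either admits a lattice projection onto a hollow polygon or is equivalent to one of finitely many exceptions. As $2\Delta_2$ is (up to equivalence) the only hollow polygon of width at least $2$, and a projection onto a polygon of smaller width would contradict $\width(S_{ij})\ge 2$, each $S_{ij}$ either projects onto $2\Delta_2$ or lies in a fixed finite list.

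The combinatorial heart of the argument is the equivalence
\[
S_{ij}\ \text{projects onto}\ 2\Delta_2\quad\Longleftrightarrow\quad (P_i,P_j)\ \text{admits a common lattice projection onto translates of}\ \Delta_2.
\]
The nontrivial direction uses that the triangle $2\Delta_2$ is Minkowski-indecomposable: if $\psi(S_{ij})=2\Delta_2$ then $\psi(P_i)+\psi(P_j)=2\Delta_2$ forces each summand to be homothetic to $\Delta_2$, while the degenerate options of a point or a segment are excluded because $P_i$ and $P_j$ are $3$-dimensional; hence $\psi(P_i)=\psi(P_j)=\Delta_2$ up to translation. Thus the number $m\in\{0,1,2,3\}$ of pairs admitting a common $\Delta_2$-projection equals $3$ minus the number of exceptional sums, which splits the problem precisely along the cases of the theorem.

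For $m\le 2$ I would establish a universal size bound and then enumerate. Each exceptional sum is drawn from a fixed finite list, so its two summands, being Minkowski summands of a bounded polytope, are universally bounded. When $m=0$ or $m=1$ the exceptional sums already involve all three polytopes, bounding the whole triple. When $m=2$ a single exceptional sum bounds two polytopes, say $P_1,P_2$; the remaining $P_3$ projects onto $\Delta_2$ along the two kernel directions inherited from the now finitely many $P_1,P_2$, and a fixed bounded polytope projects onto $\Delta_2$ along only finitely many lattice directions, so these two directions range over a finite set and, being distinct, confine $P_3$ to the intersection of two $\Delta_2$-prisms with a uniform bound. In all three cases $P_1,P_2,P_3$ are universally bounded, reducing the classification to a finite (computer-assisted) search returning the lists of $29$, $141$ and $82$ triples; throughout one must also discard those configurations whose full sum $P_1+P_2+P_3$ is hollow, as these have $\md=0$.

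The remaining and hardest case is $m=3$, where all three sums project onto $2\Delta_2$ and each $P_i$ is simultaneously a Lawrence prism over $\Delta_2$ with respect to two distinct projection axes among $v_{12},v_{13},v_{23}$. If two of these axes coincide, the shared projection $\varphi$ satisfies $\varphi(P_i)=\Delta_2$ for all $i$ after translating each $P_i$ independently, which lands in the ``main'' alternative of the theorem; so I may assume the three axes are pairwise distinct. Each $P_i$ is then still bounded as an intersection of two $\Delta_2$-prisms, but no longer uniformly: this intersection grows as the two axes approach a common direction. The dichotomy is whether the axes span $\R^3$ or are coplanar. In case~(4a) I would show that spanning, together with the unimodularity of the three $\Delta_2$-images and the lattice, rigidifies the configuration up to a universal bound, leaving only $27$ triples. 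In case~(4b) the coplanarity of the axes allows a shear of $\R^3$ fixing their common plane and acting transversally to it; this shear preserves all three $\Delta_2$-projections while enlarging the prisms, which produces the genuine $1$-parameter families. I expect the main obstacle to be exactly case~(4): proving the uniform rigidity of (4a) and, for (4b), writing down the finite list of $1$-parameter families explicitly and showing that every coplanar configuration of mixed degree one is equivalent to a member of one of them.
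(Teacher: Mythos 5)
Your proposal follows essentially the same route as the paper: reduce $\md(P_1,P_2,P_3)=1$ to hollowness of the three pairwise sums, invoke the classification of hollow $3$-polytopes to make the exceptional pairs finite, split on the number of pairs admitting a common projection onto translates of $\D_2$, enumerate the bounded cases computationally, and split case (iv) according to whether the three projection axes span $\R^3$ or are coplanar, with shears generating the $1$-parameter families. The one portion you flag as open is exactly what the paper supplies: Lemma~\ref{lem:proj_span_lat} shows the three projection directions are pairwise part of a lattice basis (and form one when they span $\R^3$), which in Proposition~\ref{prop:directions_span} confines case (iv)(a) to subpolytopes of the unit cube, while Proposition~\ref{prop:families} pins the coplanar case inside the explicit parallelepipeds $Q_k, R_k, \square_3$ and realizes the families via the shears $\psi_1^k,\psi_2^k$.
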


We refer the reader to Section~4 for a proof of Theorem~\ref{thm:classification3d}.

In the following example we present one of the
$1$-parameter families from Theorem~\ref{thm:classification3d}~\eqref{item:infinite_families}.
Let us denote $\square_2 \coloneqq \conv(\bzero,\be_1,\be_2,\be_1+\be_2) 
\subset \R^2$.

\begin{ex}
\label{ex:special}
Let $P^k_1,P^k_2,P_3 \subset \R^3$ be the triple given by
		\begin{itemize}[label={}]
			\item $P^k_1 \coloneqq \conv(\be_1, \be_2, \be_1 + \be_2, 2\be_2, k  \be_2 + \be_3)$,
			\item $P^k_2 \coloneqq \conv(\be_1, \be_2, \be_1 + \be_2, 2\be_1, k  \be_1 + \be_3)$,
			\item $P_3 \coloneqq \conv(\bzero,\be_1, \be_2, \be_1 + \be_2, \be_3)$,
		\end{itemize}
for some $k \in \Z_{\geq 0}$. Then $\md(P^k_1,P^k_2,P_3)=1$ and,
while all pairs in $P^k_1,P^k_2,P_3$ admit a common lattice projection
onto translates of $\D_2$, there is no lattice projection
commonly mapping the whole triple
$P^k_1,P^k_2,P_3$ onto translates of $\D_2$.
Note that $P^k_1, P^k_2$ and $P_3$ as single lattice polytopes are all equivalent to $\Pm(\square_2)$ for all 
$k \in \Z_{\geq 0}$ (see Figure~\ref{fig:ex_family}).
\end{ex}

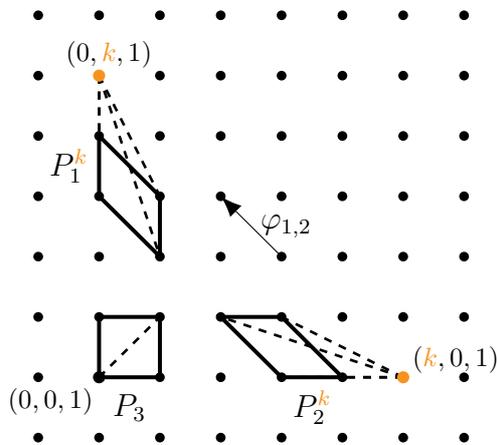
\begin{figure}[h]
\begin{tikzpicture}[scale=0.8]
\foreach \x in {-1,...,6}
	\foreach \y in {-1,...,6}
		{ \fill (\x,\y) circle (0.08); }
\draw[line width=1.5] (0,0)--(1,0)--(1,1)--(0,1)--(0,0);
\draw[line width=1.5] (3,0)--(4,0)--(3,1)--(2,1)--(3,0);
\draw[line width=1.5] (0,3)--(0,4)--(1,3)--(1,2)--(0,3);
\draw[dashed, line width=1] (0,5)--(0,3);
\draw[dashed, line width=1] (0,5)--(0,4);
\draw[dashed, line width=1] (0,5)--(1,3);
\draw[dashed, line width=1] (0,5)--(1,2);
\fill[yellow!50!red] (0,5) circle (0.1);
\draw[dashed, line width=1] (5,0)--(3,0);
\draw[dashed, line width=1] (5,0)--(4,0);
\draw[dashed, line width=1] (5,0)--(3,1);
\draw[dashed, line width=1] (5,0)--(2,1);
\fill[yellow!50!red] (5,0) circle (0.1);
\fill (0,0) circle (0.1);
\draw[dashed, line width=1] (0,0)--(1,1);
\node [right] at (-0.7,5.35) {$(0, \textcolor{yellow!50!red}{k}, 1)$};
\node [right] at (5,0.3) {$(\textcolor{yellow!50!red}{k}, 0, 1)$};
\node at (-0.5,3.5) {\Large $P^{\textcolor{yellow!50!red}{k}}_1$};
\node at (3.5,-0.5) {\Large $P^{\textcolor{yellow!50!red}{k}}_2$};
\node at (0.5,-0.5) {\Large $P_3$};
\node at (-0.8,-0.4) {$(0,0,1)$};
\draw [decoration={markings,mark=at position 1 with
    {\arrow[scale=2,>=latex]{>}}},postaction={decorate}] (3,2)--(2,3);
    \node [right] at (2.5,2.5) {\Large $\varphi_{1,2}$};
\end{tikzpicture}
\caption{
\label{fig:ex_family}
Top view of the infinite family from Example~\ref{ex:special}. The arrow labeled $\varphi_{1,2}$ shows the direction of the
common projection of $P_1^k$ and $P_2^k$ onto
translates of $\D_2$.
The common projections of $P_3$ and $P_1^k$ as well as 
$P_3$ and $P_2^k$ are given by the projection along the second and
the first coordinate respectively.}
\end{figure}

All computations have been carried out using Magma \cite{BCP97} and the code can 
be found at \url{https://github.com/gabrieleballetti/mixed_degree_one}.

\subsection*{Acknowledgements}
We thank Benjamin Nill for providing the initial idea
for the project and giving helpful advice and comments
throughout the progress. Furthermore we thank Ivan Soprunov for 
valuable feedback on an intermediate version of the paper.

We also thank Akiyoshi Tsuchiya and Takayuki Hibi for the opportunity to work together on the project during the Summer School and Workshop on Lattice Polytopes in Osaka 2019. This article can be seen as a vast generalization of our first investigation on the 3-dimensional case that appeared in the note ``Families of 3-dimensional polytopes of mixed degree one" in the proceedings of that conference
(\cite{proceedings19}).

The first author was partially supported by the Vetenskapsr\aa det grant NT:2014-3991. The second author is funded by the Deutsche Forschungsgemeinschaft (DFG, German Research Foundation) - 314838170, GRK 2297 MathCoRe. 

%------------------------------------------------------------------------------
\section{Cayley sums and projections}
\label{toolbox}
%------------------------------------------------------------------------------

While there is a well-known way of determining whether two lattice
polytopes are equivalent by comparing their normal forms
(see e.g. \cite{KS98}), the task of checking two tuples for equivalence seems 
a priori more complicated. However, we will show that in our setting
the construction of the Cayley sum of a tuple allows us
to reduce the problem of checking equivalence of tuples to 
checking equivalence of two higher-dimensional lattice polytopes.
The Cayley sum (or Cayley polytope) 
construction occurs in various contexts in the
literature, for example in the construction of mixed subdivisions
of Minkowski sums (\cite{dLRS10}), in the study
of $A$-discriminants (\cite{FI16,GKZ94}) and in structural results
on lattice
polytopes of high dimension and small degree 
(\cite{dRHN11,DdRP09}).

Let us recall the definition and some basic properties of the 
Cayley sum of a tuple of lattice polytopes.

\begin{defn}
    Let $P_1, \dotsc , P_k \subset \R^n$ be lattice polytopes.
    We define the \emph{Cayley sum} $P_1 * \dots * P_k$ as 
    \begin{align*}
	    P_1 * \dots * P_k \coloneqq 
	    \conv((P_1 \times \{\bzero\}) \cup (P_2 \times \{\be_1\}) \cup \dots \cup 
        (P_k \times \{\be_{k-1}\})) \subset \R^{n+k-1}. 
    \end{align*}
    We call a Cayley sum $P_1 * \dots * P_k$ \emph{proper} if $P_i \neq 
    \emptyset$ for all $1 \leq i \leq k$.
    In this case one has $\dim(P_1 * \dots * P_k) = 
    \dim(P_1 + \dots + P_k) + k - 1$.
\end{defn}

\begin{prop}[{\cite[Proposition~2.3]{BN08}}]
	\label{cayleyEqProjection}
	Let $P \subset \R^n$ be a lattice polytope. Then the following are equivalent.
	\begin{enumerate}
		\item There exists a lattice projection $\varphi \colon \R^n \to \R^{k-1}$
		with $\varphi(P) = \Delta_{k-1}$,  
		\item there exist lattice polytopes $P_1,\dots,P_k \subset \R^{n-k+1}$ 
		such that $P \cong P_1 * \dots * P_k$.
	\end{enumerate}
	In particular if $k=n$, $P$ is the Cayley sum of $n$ segments.
\end{prop}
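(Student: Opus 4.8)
The plan is to prove the two implications separately, the point being that the Cayley sum carries a tautological projection onto $\Delta_{k-1}$, which makes one direction essentially formal, whereas the reverse direction turns on a single lattice-geometric observation about $\Delta_{k-1}$.

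For the implication (ii) $\Rightarrow$ (i) I would proceed as follows. After identifying $P$ with $P_1 * \dots * P_k \subseteq \R^{n}$ through the given equivalence, write $\R^n = \R^{n-k+1} \times \R^{k-1}$ and let $\pi$ be the projection onto the last $k-1$ coordinates. Then $\pi$ is a lattice projection, and by the definition of the Cayley sum it sends $P_i \times \{\be_{i-1}\}$ to $\be_{i-1}$ (with $\be_0 \coloneqq \bzero$), so that $\pi(P_1 * \dots * P_k) = \conv(\bzero, \be_1, \dots, \be_{k-1}) = \Delta_{k-1}$. Composing $\pi$ with the equivalence yields the required $\varphi$.

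For the converse (i) $\Rightarrow$ (ii) the first step is to normalize $\varphi$. Since $\varphi$ is a lattice projection, its linear part induces a surjection $\Z^n \twoheadrightarrow \Z^{k-1}$, which splits because $\Z^{k-1}$ is free; hence after an affine lattice-preserving change of coordinates I may assume that $\varphi$ is the projection onto the last $k-1$ coordinates and that the kernel lattice is $\Z^{n-k+1} \times \{\bzero\}$. Writing $v_1 = \bzero, v_2 = \be_1, \dots, v_k = \be_{k-1}$ for the vertices of $\Delta_{k-1}$, I set $P_i \coloneqq P \cap \varphi^{-1}(v_i)$. Since $v_i$ is a vertex of $\varphi(P) = \Delta_{k-1}$, pulling back a supporting functional at $v_i$ exhibits $P_i$ as a nonempty face of $P$, hence a lattice polytope; and as its affine hull is the lattice translate $\R^{n-k+1} \times \{v_i\}$ of the kernel, dropping the last $k-1$ coordinates identifies $P_i$ with a lattice polytope $\bar P_i \subseteq \R^{n-k+1}$.

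The crux, which I expect to be the main obstacle, is to show $P = \conv(P_1 \cup \dots \cup P_k)$; convexity gives ``$\supseteq$'' at once, so the content is the reverse inclusion. Here I would use that $\Delta_{k-1}$ is unimodular, so its only lattice points are the vertices $v_1, \dots, v_k$. Every vertex $w$ of $P$ is a lattice point, so $\varphi(w) \in \Delta_{k-1} \cap \Z^{k-1} = \{v_1, \dots, v_k\}$; thus $w$ lies in some fiber $\varphi^{-1}(v_i)$ and hence in $P_i$. Taking convex hulls of the vertices then yields $P \subseteq \conv(P_1 \cup \dots \cup P_k)$. In the normalized coordinates $P_i = \bar P_i \times \{v_i\}$, so $\conv(P_1 \cup \dots \cup P_k)$ is by definition the Cayley sum $\bar P_1 * \dots * \bar P_k$, giving $P \cong \bar P_1 * \dots * \bar P_k$ and establishing (ii). The ``in particular'' clause is then immediate by taking $k = n$: here $n-k+1 = 1$, so each $\bar P_i$ is a one-dimensional lattice polytope, that is a lattice segment, and $P$ is the Cayley sum of $n$ segments.
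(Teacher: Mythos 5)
The paper does not prove this statement; it is quoted verbatim from Batyrev--Nill \cite[Proposition~2.3]{BN08}, so there is no in-paper proof to compare against. Your argument is correct and is the standard one: the forward direction is the tautological projection of a Cayley sum onto $\Delta_{k-1}$, and the converse rests on splitting the lattice projection and on the observation that $\Delta_{k-1}\cap\Z^{k-1}$ consists only of the vertices, so every vertex of $P$ lands in one of the fibers $P\cap\varphi^{-1}(v_i)$, which are faces of $P$. The only cosmetic point is in the final clause: for $k=n$ the polytopes $\bar P_i\subset\R^1$ may be single points rather than genuine segments, which is consistent with the paper's later usage $I_i=[0,a_i]$ with $a_i\in\Z_{\geq 0}$ allowing $a_i=0$.
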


\begin{rem}
	\label{rem:faces}
	Let $P = P_1 * \dots * P_k$. Then all faces of $P$ are of the 
	form $F = F_1 * \dots * F_k$ for (possibly empty) faces 
	$F_i \subseteq P_i$. One has
	$\dim(F)=\dim(F_1+\dots+F_k)+l-1$ where $l$ is the number of $F_i$ that
	satisfy $F_i \neq \emptyset$. 
	In particular, each of the $P_i$ corresponds to 
	a face of $P$ which we will denote by $\hat{P_i}$. 
	Furthermore $\hat{P}_i^c$, by which we denote the convex hull of 
	all vertices of $P$ that are not contained in $\hat{P_i}$,
	is always a proper face of $P$.
\end{rem}

	Let us now formulate the lemma which allows us to determine
	equivalence of certain $n$-dimensional tuples by comparing the 
	normal forms of their Cayley sums.	
	
\begin{lem}
	\label{lem:cayley_criterion}
    Let $P_1,\dots,P_k$ and $Q_1,\dots,Q_k$ be $k$-tuples of $n$-dimensional
    lattice polytopes in $\R^n$. Assume that there is no lattice projection 
    $\varphi \colon \R^n \to \R^{k-1}$ mapping $P_1,\dots,P_k$ onto 
    translates of $\D_{k-1}$. 
    Then the $k$-tuples $P_1,\dots,P_k$ and 
    $Q_1,\dots,Q_k$ are equivalent if and only if their Cayley sums 
    $P_1 * \dots * P_k$ and $Q_1 * \dots * Q_k$ are equivalent. 
\end{lem}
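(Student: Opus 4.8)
The plan is to prove the two implications separately. The forward implication --- that equivalent tuples have equivalent Cayley sums --- needs no hypothesis, so I would dispatch it first by a direct construction. Writing the Cayley space as $\R^n \times \R^{k-1}$ and letting $h_1,\dots,h_k$ denote the heights $\bzero,\be_1,\dots,\be_{k-1}$ at which the factors sit, an equivalence $P_i = U(Q_{\sigma(i)}) + t_i$ (with $\sigma \in S_k$, $U$ lattice-preserving affine, $t_i \in \Z^n$) is realized on the Cayley sums by composing three lattice-preserving maps that each respect this splitting: the map $U \times \id$ handles $U$; the lattice automorphism of $\R^{k-1}$ permuting the vertices $h_1,\dots,h_k$ of $\D_{k-1}$ (times the identity on $\R^n$) handles $\sigma$; and the shear $(x,y) \mapsto (x + T(y), y)$ with $T \colon \R^{k-1} \to \R^n$ the integral map defined by $T(h_i) = t_i - t_1$, followed by the global translation by $(t_1,\bzero)$, handles the individual translations. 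Their composition carries $Q_1 * \dots * Q_k$ to $P_1 * \dots * P_k$.

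For the converse, I would start from a lattice equivalence $\Phi \colon \mathcal{P} \to \mathcal{Q}$ of the Cayley sums $\mathcal{P} = P_1 * \dots * P_k$ and $\mathcal{Q} = Q_1 * \dots * Q_k$. This endows $\mathcal{Q}$ with two Cayley structures: its canonical one (with canonical projection $\varphi_{\mathcal{Q}}$, recovering the tuple $Q$) and the pushforward $\tilde\varphi = \varphi_{\mathcal{P}} \circ \Phi^{-1}$ of the canonical projection of $\mathcal{P}$, whose vertex-fibers are the faces $\Phi(\hat P_i)$ and hence recover, up to equivalence, the tuple $P$. By Proposition~\ref{cayleyEqProjection} both are lattice projections of $\mathcal{Q}$ onto $\D_{k-1}$, and the whole question reduces to comparing them through their kernels. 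If the two kernels agree as lattice subspaces up to translation, the projections differ only by a lattice automorphism of the target fixing $\D_{k-1}$ up to translation; these automorphisms are exactly the vertex-permutations of $\D_{k-1}$, so $P$ and $Q$ differ by a permutation and are equivalent.

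The case of distinct kernels is the heart of the matter, and it is here that the hypothesis enters. The key sublemma I would establish is: if $\mathcal{R} = R_1 * \dots * R_k$ with all $R_i$ full-dimensional admits a lattice projection onto $\D_{k-1}$ whose kernel differs from that of the canonical projection, then the tuple $(R_1,\dots,R_k)$ admits a common lattice projection onto translates of $\D_{k-1}$. Granting this, I would apply it on $\mathcal{Q}$ with $\tilde\varphi$ playing the role of the canonical projection (so its tuple is $P$) and $\varphi_{\mathcal{Q}}$ the differing one; the conclusion is a common projection of $P_1,\dots,P_k$ onto translates of $\D_{k-1}$, contradicting the hypothesis. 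Thus distinct kernels cannot occur and the equal-kernel case forces $P \equiv Q$. To prove the sublemma, write the differing projection as $\pi'(x,y) = Ax + By + c$ on $\R^n \times \R^{k-1}$; distinctness of kernels is equivalent to $A \neq 0$, and I set $r = \rank(A)$. Since $\pi'$ realizes $\mathcal{R}$ as a Cayley sum, all vertices of $\mathcal{R}$ lie over the vertices of $\D_{k-1}$ (cf. Remark~\ref{rem:faces}), which forces each image $\pi'(\hat R_i) = A(R_i) + B h_i + c$ (with $h_i$ the heights as above) to be a face of $\D_{k-1}$, of dimension exactly $r$ because $R_i$ is full-dimensional. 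All these faces have edge-directions inside $\mathrm{im}(A)$ and, having dimension $r = \dim \mathrm{im}(A)$, the same linear span; as parallel faces of a simplex must share their vertex set, their convex hull equals $\D_{k-1} = \pi'(\mathcal{R})$ only if that shared vertex set is all of $\{h_1,\dots,h_k\}$, forcing $r = k-1$. Then $A$ is surjective, each $A(R_i)$ is a full-dimensional lattice polytope inscribed on its vertices in a translate of $\D_{k-1}$ and therefore equal to such a translate, and $A$ is a lattice projection --- so $x \mapsto Ax$ is the sought common projection. I expect this last step --- promoting $\rank(A)$ to $k-1$ via the rigidity of parallel faces of a simplex --- to be the main obstacle, the forward direction and the equal-kernel case being routine bookkeeping.
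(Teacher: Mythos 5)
Your proof is correct, and the forward direction and the final ``equal structures implies equivalent tuples'' step coincide with the paper's. Where you genuinely diverge is in how the hypothesis is used to rule out a mismatch between the two Cayley structures. The paper argues combinatorially on the faces: assuming some $\Phi(\hat{P}_i)$ is not a $\hat{Q}_j$, it uses the dimension formula of Remark~\ref{rem:faces} to show $\Phi(\hat{P}_i)$ can neither contain nor be disjoint from any $\hat{Q}_j$, hence meets all of them, so the canonical projection of $Q_1*\dots*Q_k$ restricted to the parallel affine hulls $\mathrm{aff}(\Phi(\hat{P}_i))$ is surjective onto $\D_{k-1}$ and descends to a common projection of the $P_i$ --- contradiction. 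You instead make the dichotomy ``equal vs.\ distinct kernels'' explicit and, in the distinct case, analyze the linear part $A$ of the transported projection: every vertex of a Cayley sum lies over a vertex of $\D_{k-1}$ (because $\D_{k-1}$ has no other lattice points), so each $\pi'(\hat{R}_i)$ is an $r$-face with linear span $\mathrm{im}(A)$, and since parallel faces of positive dimension in a simplex coincide while their union must project onto all of $\D_{k-1}$, you force $r=k-1$ and read off $x\mapsto Ax$ as the forbidden common projection (the lattice-surjectivity of $A$ follows, as you implicitly use, from $A(\Z^n)\subseteq\Z^{k-1}$ together with $A(R_i)$ being a unimodular simplex). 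Both routes are valid and arrive at the same contradiction; yours trades the paper's containment/disjointness casework for two small geometric facts you should state explicitly (vertices of a Cayley sum lie over vertices of $\D_{k-1}$; parallel $r$-faces of a simplex with $r\geq 1$ coincide), and in exchange isolates a cleanly reusable sublemma about polytopes carrying two Cayley structures, essentially a relative version of Lemma~\ref{projimplunimod}.
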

\begin{proof}
	The fact that Cayley sums of equivalent $k$-tuples of 
	lattice polytopes are equivalent
	is a straightforward consequence of the definition. 
    Suppose now that the $k$-tuple $P_1,\dots,P_k$ does not admit a
    common projection onto translates of $\D_{k-1}$ and suppose that
    $P_1 * \dots * P_k$ and $Q_1 * \dots * Q_k$ are equivalent.
    Let $P_1',\dots,P_k'$ be the images of $\hat{P}_1,\dots,
    \hat{P}_k$ under the lattice-preserving transformation yielding the
    equivalence, respectively.
    We will first show that, up to renumbering, $P_i'=\hat{Q}_i$ for 
    all $1 \leq i \leq k$.
    Suppose without loss of generality $P_1' \neq \hat{Q}_i$ for all 
    $1 \leq i \leq k$.
    Note that $P_1'$ cannot properly contain any 
    $\hat{Q}_i$
    (by Remark~\ref{rem:faces} any face of $Q_1 * \dots * Q_k$ that properly
    contains a $\hat{Q}_i$ is of dimension greater or equal to $n+1$).
    This also implies that $P_1'$ cannot be disjoint to some $\hat{Q}_i$ due
    to the following. If $P_1'$ was disjoint to, say, $\hat{Q}_1$, 
    its complement $(P_1')^c$ would contain $\hat{Q}_1$. As however
    $P_1'$ does not contain any $\hat{Q}_i$, the complement 
    $(P_1')^c$ contains at least one point of $\hat{Q}_i$ for each 
    $1 \leq i \leq k$.
    Therefore, if we pick points $p_2,\dots,p_k$ in 
    $\hat{Q}_2 \setminus P_1',\dots,\hat{Q}_k \setminus P_1'$
     respectively, we have $\dim(\hat{P}_1^c) = \dim((P_1')^c) \geq 
    \dim(\hat{Q}_1 * \set{p_2} * \dots * \set{p_k}) = n+k-1$, which is
    a contradiction to $\hat{P_1^c}$ 
    being a proper face of the Cayley sum 
    $P_1 * \dots * P_k$ (see Remark~\ref{rem:faces}).
    
    As $P_1'$ was chosen arbitrarily this argumentation implies that all
    $P_i'$ have non-empty intersection with all $\hat{Q}_j$. Therefore the 
    natural projection of $Q_1 * \dots * Q_k$ onto $\Delta_{k-1}$ remains
    surjective when restricted to the affine hull of $P_i'$ for 
    each $1 \leq i \leq k$. As the affine hulls of $P'_1,\dots,P'_k$ are
    by construction pairwise parallel $n$-dimensional 
    affine subspaces of $\R^{n+k-1}$ the natural projection yields
    a lattice projection $\varphi \colon \R^n \to \R^{k-1}$ mapping 
    $P_i$ onto a translate of $\D_{k-1}$ for each $1 \leq i \leq k$.    
	This contradicts our assumption. %[is this understandable? check again!]
    
    This shows that, if $P_1,\dots,P_k$ do not have a common projection
	onto translates of $\D_{k-1}$, any affine lattice-preserving
	transformation $U \colon \R^{n+k-1} \to \R^{n+k-1}$ mapping
	$P_1 * \dots * P_k$ to $Q_1 * \dots * Q_k$ yields (up to
	renumbering) a bijection mapping the face $\hat{P}_i$ to the
	face $\hat{Q}_i$. We may without loss of generality assume that
	$U$ preserves the origin. Restricting $U$ to a map from the affine hull
	of $P_1$ to the affine hull of $Q_1$ we obtain a linear lattice-preserving
	transformation $L \colon \R^n \to \R^n$. 
	It is straightforward to verify that restricting $U$ to the affine
	hull of any other $P_i$ results in a map
	$x \mapsto L(x)+t_i$ from $\R^n \to \R^n$ for some $t_i \in \Z^n$.	
\end{proof}

%------------------------------------------------------------------------------
\section{Proof of the main theorem}
%------------------------------------------------------------------------------

From Proposition~\ref{cayleyEqProjection} we can easily deduce the following lemma.
We call a facet $F$ of a lattice polytope $P \subset \R^n$ \emph{unimodular}
if $F$ is a unimodular simplex inside the affine lattice defined as the 
intersection of the affine hull of $F$ with $\Z^n$.

\begin{lem}
	\label{lem:unimodularfacets}
    Let $P \subset \R^n$ be an $n$-dimensional lattice polytope and 
    $\varphi \colon \R^n \to \R^{n-1}$ a lattice projection that projects $P$ onto $\D_{n-1}$. Then $\ker\varphi=\R e$ where $e$ is a vector parallel to an edge 
    between vertices $v_1,v_2$, where $v_1 \in F_1$ 
    and $v_2 \in F_2$ for two different unimodular facets $F_1 \neq F_2$ of $P$.
\end{lem}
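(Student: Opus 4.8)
The plan is to use Proposition~\ref{cayleyEqProjection} with $k=n$ to exhibit $P$ as a Cayley sum of $n$ segments adapted to $\varphi$, and then to locate the desired edge and the two unimodular facets as the ``bottom'' and ``top'' faces of this Cayley sum. First I would extend the primitive generator $e$ of $\ker\varphi$ to a lattice basis $e=f_1,f_2,\dots,f_n$ of $\Z^n$ with $\varphi(f_2),\dots,\varphi(f_n)$ a basis of $\Z^{n-1}$, so that after an equivalence we may assume $\R^n=\R\times\R^{n-1}$ with $e=(1,\bzero)$ and $\varphi(t,y)=y$. By Proposition~\ref{cayleyEqProjection} this realizes $P=P_1*\dots*P_n$ as the convex hull of its fibers $\hat P_i$ over the vertices $u_0=\bzero,u_1=\be_1,\dots,u_{n-1}=\be_{n-1}$ of $\D_{n-1}$ (cf.\ Remark~\ref{rem:faces}); each such fiber is the preimage of a vertex of $\varphi(P)$, hence a face of $P$, and thus a lattice segment $\hat P_i=[a_i,b_i]\times\{u_{i-1}\}$ parallel to $e$ with $a_i,b_i\in\Z$.

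The next step is to produce the edge itself. Since $P$ is $n$-dimensional it has at least $n+1$ vertices, and all vertices lie among the $2n$ candidate points $(a_i,u_{i-1}),(b_i,u_{i-1})$; hence the fibers cannot all be singletons, so $a_i<b_i$ for some $i$, and $\hat P_i$ is then a genuine edge of $P$ parallel to $e$. I set $v_1=(a_i,u_{i-1})$ and $v_2=(b_i,u_{i-1})$.

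It remains to exhibit two distinct unimodular facets through $v_1$ and $v_2$. I would take $L,U\colon\R^{n-1}\to\R$ to be the unique affine functions with $L(u_{j-1})=a_j$ and $U(u_{j-1})=b_j$ for all $j$ (unique because the $u_{j-1}$ are affinely independent). The linear functional $(t,y)\mapsto t-L(y)$ is then nonnegative on every vertex of $P$ and vanishes exactly on $F_L:=\conv\{(a_j,u_{j-1}):1\le j\le n\}$, which projects bijectively onto $\D_{n-1}$ and is therefore an $(n-1)$-dimensional facet; symmetrically $(t,y)\mapsto U(y)-t$ cuts out the facet $F_U:=\conv\{(b_j,u_{j-1}):1\le j\le n\}$. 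To check unimodularity I would compute the edge vectors of $F_L$ based at $(a_1,\bzero)$, namely $g_j=(a_j-a_1,\be_{j-1})$ for $2\le j\le n$: from any integer point $\sum_j\mu_j g_j=(s,z)\in\Z^n$ the $\R^{n-1}$-coordinates force $\mu_j=z_{j-1}\in\Z$, so the $g_j$ form a $\Z$-basis of $(\aff F_L)\cap\Z^n$ and $F_L$ is a unimodular simplex; the identical argument handles $F_U$. Finally $v_1\in F_L$, $v_2\in F_U$, and $F_L\neq F_U$ since $a_i<b_i$ exhibits $v_1\in F_L\setminus F_U$, so $F_1=F_L$ and $F_2=F_U$ are the required distinct unimodular facets and $[v_1,v_2]=\hat P_i$ is an edge parallel to $e$.

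The main obstacle is less any single computation than correctly pinning down the facet structure: one must recognize that the relevant facets are the lower and upper affine graphs of $L$ and $U$ over the simplex base $\D_{n-1}$, and then verify both that these are genuine facets (via the supporting functionals, using that $P$ is the convex hull of its vertex-fibers) and that they are unimodular. The unimodularity is the crux, but it collapses to the observation that the $\R^{n-1}$-components of the edge vectors $g_j$ are exactly the standard basis vectors, which forces integrality of all coefficients.
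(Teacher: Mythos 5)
Your argument is correct and is precisely the deduction the paper has in mind: the paper gives no written proof, stating only that the lemma follows easily from Proposition~\ref{cayleyEqProjection}, and your writeup realizes $P$ as a Cayley sum of $n$ segments over the vertices of $\D_{n-1}$ and identifies the lower and upper facets $F_L,F_U$ exactly as intended. All the details check out, including the unimodularity computation via the edge vectors $g_j=(a_j-a_1,\be_{j-1})$ and the observation that some fiber must be a nondegenerate segment because $P$ is $n$-dimensional.
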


We now study $n$-dimensional polytopes projecting onto
$\D_{n-1}$ along multiple directions. Recall that we denote by
$\mathcal{P}^{n-2}(\square_2)$ the $(n-2)$-fold
lattice pyramid formed over the square $\square_2 = \conv(\bzero,\be_1,\be_2,\be_1+\be_2)\subset \R^2$.

\begin{lem}
\label{projimplunimod}
Let $P \subset \R^n$ be an $n$-dimensional lattice polytope such that there
are different lattice projections 
$\varphi_1,\varphi_2 \colon \R^n \to \R^{n-1}$ that map $P$ onto 
$\D_{n-1}$.
Then $P$ is equivalent either to the unimodular simplex $\D_n$ or to $\mathcal{P}^{n-2}(\square_2)$. 
If there exists another projection $\varphi_3 \colon \R^n \to \R^{n-1}$ mapping $P$ onto $\D_{n-1}$, then 
$P$ is necessarily equivalent to $\Delta_n$.
\end{lem}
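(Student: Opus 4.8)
The plan is to first reduce to a completely explicit situation and then run a short counting argument. By Proposition~\ref{cayleyEqProjection} applied with $k=n$ to $\varphi_1$, the polytope $P$ is, up to equivalence, the Cayley sum of $n$ lattice segments, i.e.\ a Lawrence prism. After fixing coordinates I may assume that $\varphi_1$ is the coordinate projection forgetting the last coordinate, that $\varphi_1(P)=\D_{n-1}=\conv\{\mathbf{0},\be_1,\dots,\be_{n-1}\}$, and that the vertices of $P$ are the base points $B_i=(w_i,0)$ together with the top points $T_i=(w_i,a_i)$ for those $i$ with $a_i>0$, where $w_0=\mathbf{0}$, $w_i=\be_i$, and $a_0,\dots,a_{n-1}\in\Z_{\ge 0}$ are the heights (some $a_i>0$, as $P$ is $n$-dimensional). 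Writing $m=\#\{i:a_i>0\}$, the polytope $P$ has exactly $n+m$ vertices, and a direct computation (the upper boundary of $P$ is the graph of a single integral affine function, since there are only $n$ top points over the $n$ vertices of the base simplex) shows that the normalized volume $n!\,\vol_n(P)$ equals $N:=\sum_i a_i$.

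Next I would exploit the second projection. Since $\varphi_2\neq\varphi_1$ their kernels differ, so $\ker\varphi_2=\R g$ for a primitive $g$ whose image $\bar g:=\varphi_1(g)$ is nonzero; call such a direction \emph{non-vertical}. The crucial elementary observation is that every non-vertical edge of $P$ has lattice length $1$: its endpoints lie over distinct vertices of the base simplex, so the horizontal part of its direction vector is a difference of two distinct vertices of $\D_{n-1}$ and hence has a $\pm1$ coordinate, forcing primitivity. Now Proposition~\ref{cayleyEqProjection} realizes $P$ via $\varphi_2$ as a Cayley sum of $n$ segments in direction $g$; each non-trivial segment is the fibre of $\varphi_2$ over a vertex of $\D_{n-1}$ (the preimage of a vertex of $\varphi_2(P)$ is a face of $P$), hence an edge of $P$, hence, being non-vertical, of lattice length $1$. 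Since $P$ has $n+m$ vertices, exactly $m$ of the $n$ segments are non-trivial, so the $\varphi_2$-heights sum to $m$; comparing with the normalized volume gives $N=m$, and as the $m$ positive $a_i$ are each $\ge 1$ and sum to $m$, they are all equal to $1$.

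It remains to bound $m$, which is the heart of the argument. Each of the $m$ Cayley segments of $\varphi_2$ is a length-$1$ edge parallel to $g$, so its endpoints differ by $\pm g$ and their horizontal positions are two vertices of $\D_{n-1}$ differing by $\pm\bar g$. Because the vertices of $\D_{n-1}$ are affinely independent, the map $(u,v)\mapsto u-v$ is injective on pairs, so a single unordered pair of base vertices $\{w,w'\}$ realizes the difference $\pm\bar g$; consequently all $m$ segments join a vertex of $P$ over $w$ to one over $w'$. As each base vertex carries at most two vertices of $P$ and the segments are disjoint, this forces $m\le 2$. Hence the heights are either $(1,0,\dots,0)$, giving $P\cong\D_n$, or $(1,1,0,\dots,0)$, giving $P\cong\mathcal{P}^{n-2}(\square_2)$. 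For the last assertion I would show that $\mathcal{P}^{n-2}(\square_2)$ admits only these two projections onto $\D_{n-1}$: any projection decomposes its $n+2$ vertices into two disjoint parallel segments (both of length $1$, since the normalized volume is $2$) together with $n-2$ fixed points, and two disjoint parallel unit segments form a pair of opposite sides of a parallelogram; the vertex set of $\mathcal{P}^{n-2}(\square_2)$ spans a unique parallelogram ($B_0,B_1,T_1,T_0$), yielding exactly the vertical projection $\varphi_1$ and one horizontal projection, so a third projection forces $P\cong\D_n$. The main obstacle is precisely this edge-length/counting step: establishing that the $\varphi_2$-segments are genuine length-$1$ edges and that the injectivity of simplex-vertex differences collapses all of them onto a single pair of base vertices.
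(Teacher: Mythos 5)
Your proof is correct, but it follows a genuinely different route from the paper's. The paper also begins by writing $P$ as a Cayley sum $I_1*\dots*I_n$ of segments $[0,a_i]$ via Proposition~\ref{cayleyEqProjection}, but it then invokes Lemma~\ref{lem:unimodularfacets}: since $\varphi_2$ projects along an edge joining two unimodular facets, one of the ``side'' facets $F_k$ of the prism must be unimodular, which immediately forces all but one of the remaining heights to vanish and the other to equal $1$; a short edge-length observation ($a_k\ge 2$ would give an edge of lattice length $\ge 2$, pinning down every projection to that single direction) finishes the classification. You instead avoid Lemma~\ref{lem:unimodularfacets} altogether and argue by double counting: the normalized volume equals the sum of the heights in \emph{either} Cayley structure, every non-vertical edge is primitive because differences of vertices of $\D_{n-1}$ have a $\pm1$ coordinate, so the $\varphi_2$-heights are all $1$ and there are exactly $m=\#\mathrm{vertices}(P)-n$ of them, whence $\sum_i a_i=m$ and all positive $a_i$ equal $1$; then the injectivity of $(u,v)\mapsto u-v$ on pairs of vertices of a simplex collapses all $m$ of the $\varphi_2$-fibres onto a single pair of base vertices, giving $m\le 2$. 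Each step checks out (including the fibre-over-a-vertex-is-an-edge argument and the vertex count $n+m$ in both structures). What your approach buys is that it is elementary and self-contained, and, notably, it supplies an actual proof of the final assertion --- the uniqueness of the parallelogram spanned by the vertex set of $\mathcal{P}^{n-2}(\square_2)$ shows it has exactly two projections onto $\D_{n-1}$ --- which the paper dismisses with ``one easily verifies''. What the paper's route buys is brevity, at the cost of leaning on Lemma~\ref{lem:unimodularfacets}, whose application there (that one of the unimodular facets must be a side facet) is itself left rather terse.
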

\begin{proof}
As $P$ has one projection onto $\D_{n-1}$, by Proposition~\ref{cayleyEqProjection} we may assume
that $P$ is of the form $P = I_1 * \dots * I_n$ for  
$n$ segments $I_i = [0,a_i]$ with $a_i \in \Z_{\geq 0}$. 
Two facets of $P$ are given by $\Delta_{n-1} \times \set{0}$ and 
$\set{a_1} * \dots * \set{a_{n}}$. All other facets of $P$ are of a form
we denote by $F_k$ for $1 \leq k \leq n$, which is the 
Cayley sum of all 
$I_i$ excluding $I_k$. As there exists another lattice
 projection $\varphi_2$ mapping $P$ onto $\D_{n-1}$, by Lemma~\ref{lem:unimodularfacets}
the facet $F_k$ has to be unimodular for some $1 \leq k \leq n$.
Assume without loss of generality that $F_1$ is unimodular and
$a_2=1$ and $a_3=\dots=a_{n}=0$. 
Furthermore, $a_1$ cannot be greater than one as otherwise $P$ would have an edge of
lattice length at least $2$. Therefore any projection which is not along this
edge direction could not be projecting $P$ onto $\D_{n-1}$.
If $a_1=0$, then $P$ is equivalent to $\D_n$, otherwise $a_1=1$ and $P$ is equivalent to $\mathcal{P}^{n-2}(\square_2)$.
One easily verifies that $\mathcal{P}^{n-2}(\square_2)$ does not have more than two different projections onto $\D_{n-1}$.
\end{proof}

\begin{lem} \label{intersectCylinders}
Let $S_1, S_2 \subset \R^n$ be two unimodular $n$-dimensional simplices, $u_1,u_2 \in \Z^n$ be linearly independent edge directions for $S_1$ and $S_2$ respectively, and $C_1, C_2 \subset \R^n$ be the infinite prisms $S_1 + \R u_1$ and $S_2 + \R u_2$ respectively.
Given $z \in \Z^n$, denote by $P_z$ the intersection $\conv(C_1 \cap (C_2+z) \cap \Z^n)$. 
Let $v,w \in \Z^n$ such that $P_{v}$ and $P_{w}$ are both $n$-dimensional.
Then $P_{v}$ and $P_{w}$ are the same lattice polytope up to translation.
\end{lem}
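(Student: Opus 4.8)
The plan is to choose explicit coordinates, describe $P_z$ as a convex hull of vertical lattice segments, and then argue that $n$-dimensionality rigidifies everything except one translational parameter. Since $S_1$ is unimodular and $u_1$ is an edge direction (hence primitive), I would first apply a lattice-preserving transformation so that $S_1=\D_n$ and $u_1=\be_n$; then $C_1=\D_{n-1}\times\R\be_n$ and $C_1\cap\Z^n$ is exactly the $n$ lattice lines in direction $\be_n$ through the vertices $\be_0\coloneqq\bzero,\be_1,\dots,\be_{n-1}$ of the unimodular base $\D_{n-1}$. Projecting $S_2$ along $u_2$ yields a unimodular $(n-1)$-simplex, so $C_2$ is cut out by $n$ integral affine forms $h_1,\dots,h_n$ (its barycentric coordinates), each constant along $u_2$ and with $\sum_j h_j\equiv 1$; a lattice point lies in $C_2$ iff all $h_j\ge 0$. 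I would also record the covariances $P_{z+u_1}=P_z+u_1$ and $P_{z+u_2}=P_z$, so that the translation class of $P_z$ depends only on $z$ modulo $\Z u_1+\Z u_2$. As this quotient is infinite for $n\ge 3$, a finite check is unavailable and a genuine structural argument is needed.

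Writing $\delta_j\coloneqq H_j(z)$, $\beta_j\coloneqq H_j(\be_n)$ and $\alpha_{ij}\coloneqq h_j(\be_i)$, where $H_j$ is the linear part of $h_j$, a lattice point $\be_i+k\be_n$ of $C_1$ lies in $C_2+z$ precisely when $\alpha_{ij}+k\beta_j\ge\delta_j$ for all $j$. Thus $P_z$ is the convex hull of $n$ vertical lattice segments, one over each vertex $\be_i$, the $i$-th segment occupying the integer window of heights $k$ solving these inequalities; note the relations $\sum_j\beta_j=\sum_j\delta_j=0$ and $\sum_j\alpha_{ij}=1$.

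The crux, and the step I expect to be the main obstacle, is to show that $n$-dimensionality makes this configuration translation-determined. Full dimension forces all $n$ segments to be nonempty and at least one of them to contain two lattice points at consecutive heights $k,k+1$; since the two corresponding $h$-vectors are standard basis vectors differing by $\beta$, this pins $\beta=\be_s-\be_r$ for a single pair $r\ne s$. With $\beta$ of this shape the inequalities for line $i$ reduce to $\delta_j\le\alpha_{ij}$ for $j\ne r,s$ together with the window $[\,\delta_s-\alpha_{is},\ \alpha_{ir}-\delta_r\,]$; a nonempty window forces $\sum_{j\ne r,s}(\alpha_{ij}-\delta_j)\le 1$, and a two-point window forces this sum to be $0$. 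Combining ``all lines present'' (which gives $\delta_j\le\min_i\alpha_{ij}$) with the existence of a two-point line then pins $\delta_j=\min_i\alpha_{ij}$ for every $j\ne r,s$, values independent of $z$; the relation $\sum_j\delta_j=0$ fixes $\delta_r$ in terms of $\delta_s$, leaving $\delta_s$ as the only free parameter. Since raising $\delta_s$ by $1$ raises both endpoints of every height window by $1$, the entire segment configuration merely translates by $\be_n$.

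Finally I would conclude: if $P_v$ and $P_w$ are both $n$-dimensional, each arises from one and the same fixed configuration by a translation in the $u_1$-direction, hence $P_v$ and $P_w$ agree up to a lattice translation. What remains is routine bookkeeping with the two unimodular barycentric systems, together with a careful verification that the degenerate situations (segments collapsing to points, or loss of full dimension) are exactly those excluded by the hypothesis that $P_v$ and $P_w$ are $n$-dimensional.
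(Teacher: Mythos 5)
Your proof is correct, but it takes a genuinely different route from the paper's. The paper first applies its Lemma~\ref{projimplunimod} to conclude that an $n$-dimensional $P_v$ must be equivalent to $\D_n$ or to $\mathcal{P}^{n-2}(\square_2)$ with edges parallel to $u_1,u_2$, then normalizes coordinates so that $P_v$ \emph{is} one of these two polytopes in standard position, and finishes by a direct verification in each of the two explicit cases that full-dimensional intersections $C_1\cap(C_2+w)$ occur exactly for $w-v\in\Z u_1+\Z u_2$ and are translates of $P_v$. You instead give a self-contained computation: after normalizing only $C_1$, you encode $C_2+z$ by the pulled-back barycentric coordinates $h_j$ of the unimodular quotient simplex and show that $n$-dimensionality forces $\beta=\be_s-\be_r$, pins $\delta_j=\min_i\alpha_{ij}$ for $j\ne r,s$ and fixes $\delta_r+\delta_s$, leaving a single integer parameter that acts by translation along $u_1$. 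I checked the key identities ($\sum_j\beta_j=\sum_j\delta_j=0$, $\sum_j\alpha_{ij}=1$, the window $[\delta_s-\alpha_{is},\alpha_{ir}-\delta_r]$, and the squeeze $\alpha_{i_0j}=\delta_j\le\min_i\alpha_{ij}\le\alpha_{i_0j}$ coming from a two-point line combined with all lines being nonempty) and they all hold; note that the step $\beta\ne 0$, needed to get $\beta=\be_s-\be_r$ with $r\ne s$ rather than $\beta=0$, is exactly where the linear independence of $u_1$ and $u_2$ enters, since $\bigcap_j\ker H_j=\R u_2$. What your approach buys is independence from Lemma~\ref{projimplunimod} and from the ``one easily verifies'' checks (indeed your window-length computation re-derives that each vertical fiber carries at most two lattice points, i.e.\ that $P_z$ sits inside a copy of $\mathcal{P}^{n-2}(\square_2)$); what the paper's approach buys is brevity, since that classification lemma is already available and reduces everything to two concrete normal forms.
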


\begin{proof}
By Lemma~\ref{projimplunimod} we know that, if there exists
$v \in \Z^n$ such that $P_v$ is $n$-dimensional, then $P_v$ is 
 equivalent either to $\Delta_n$ or to $\mathcal{P}^{n-2}(\square_2)$
having two edges parallel to the directions $u_1$ and $u_2$.
In either of the two cases, we can assume $P_v$ to be exactly
$\Delta_n$ or $\mathcal{P}^{n-2}(\square_2)$.

If $P_v = \mathcal{P}^{n-2}(\square_2)$ then, up to reordering and
changes of signs, $u_1=\be_1$ and $u_2=\be_2$. In particular, $C_1 = \conv(\bzero,\be_2,\ldots,\be_n) + \R \be_1$
and $C_2 + v = \conv(\bzero,\be_1,\be_3,\ldots,\be_n) + \R \be_2$.
One easily verifies that $C_1 \cap (C_2+w)$ is full-dimensional if and only if $w-v \in \Z \be_1 + \Z \be_2$.
In all these cases $C_1 \cap (C_2+w)$ is a translation of $P_v$.

On the other hand, if $P_v=\D_n$, then there is another case distinction.
If $u_1$ and $u_2$ are parallel to adjacent edges of $D_n$, then we can assume $u_1=\be_1$ and $u_2=\be_2$.
But in this case $C_1$ and $C_2+v$ must intersect in $\mathcal{P}^{n-2}(\square_2)$ instead of in $\D_n$, hence we have a contradiction. Therefore $u_1$ and $u_2$ are parallel to non-adjacent edges of $\D_n$ and we
can assume $u_1 = \be_1$ and $u_2 = \be_2-\be_3$. In particular, $C_1 = \conv(\bzero,\be_2,\ldots,\be_n) + \R \be_1$
and $C_2 + v = \conv(\bzero,\be_1,\be_3,\ldots,\be_n) + \R (\be_2 - \be_3)$.
Again, one easily verifies that $C_1 \cap (C_2+w)$ is full-dimensional if and only if $w-v \in \Z \be_1 + \Z (\be_2-\be_3)$.
In all these cases $C_1 \cap (C_2+w)$ is a translation of $\D_n$.
\end{proof}

\begin{lem}
\label{lem:two_simplices}
Let $P \subset \R^n$ be a unimodular $n$-simplex and $\varphi_1,\varphi_2 \colon \R^n \to \R^{n-1}$
be two different lattice projections such that, for each
$1 \leq i \leq 2$, the images $\varphi_i(P)$ and $\varphi_i(\Delta_n)$
are translates of $\D_{n-1}$.
Then, up to translation and coordinate permutation, $P$ is contained in $\mathcal{P}^{n-2}(\square_2)$.
If there exists another projection $\varphi_3 \colon \R^n \to \R^{n-1}$ mapping $P$ and $\Delta_n$ onto translates of $\D_{n-1}$, then $P$ is necessarily a translate of $\D_n$.
\end{lem}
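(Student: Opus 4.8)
The plan is to realise $P$ as a lattice polytope sitting inside the intersection of two prisms of exactly the type analysed in Lemma~\ref{intersectCylinders}, and then to extract the finer ``translate'' information by keeping $\Delta_n$ in standard position. For the first part, note that a projection onto a \emph{translate} of $\Delta_{n-1}$ has the same kernel as the associated projection onto $\Delta_{n-1}$ itself, so Lemma~\ref{lem:unimodularfacets}, applied both to $P$ and to $\Delta_n$, shows that the primitive generator $u_i$ of $\ker\varphi_i$ is an edge direction of $P$ \emph{and} of $\Delta_n$; since $\varphi_1\neq\varphi_2$, the vectors $u_1,u_2$ are linearly independent. Writing $C_i=\Delta_n+\R u_i=\varphi_i^{-1}(\varphi_i(\Delta_n))$ and using $\varphi_i(P)=\varphi_i(\Delta_n)+t_i$ with $t_i\in\Z^{n-1}$, a lift $\tilde t_i\in\Z^n$ gives $P\subseteq C_i+\tilde t_i$; after translating by $-\tilde t_1$ we obtain $P-\tilde t_1\subseteq C_1\cap(C_2+z)$ with $z=\tilde t_2-\tilde t_1\in\Z^n$. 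As this intersection is $n$-dimensional, Lemma~\ref{intersectCylinders} (with the shapes identified in its proof via Lemma~\ref{projimplunimod}) shows $\conv\big((C_1\cap(C_2+z))\cap\Z^n\big)$ is, up to translation, either $\Delta_n$ or $\mathcal{P}^{n-2}(\square_2)$. Since $P$ is a lattice polytope inside this set and $\bzero,\be_1,\dots,\be_n$ are among the vertices of $\mathcal{P}^{n-2}(\square_2)$ (so $\Delta_n\subseteq\mathcal{P}^{n-2}(\square_2)$), both cases give $P\subseteq\mathcal{P}^{n-2}(\square_2)$ after a translation and a permutation of coordinates.

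For the second part I would instead run the computation of $C_1\cap C_2$ directly in coordinates where $\Delta_n$ is the standard simplex, exploiting the $S_{n+1}$-symmetry of $\Delta_n$ to normalise the edge pair: if $u_1,u_2$ are non-adjacent edges one may take $u_1=\be_1,\ u_2=\be_2-\be_3$, and a direct check gives $C_1\cap C_2=\Delta_n$; if they are adjacent one may take $u_1=\be_1,\ u_2=\be_2$, giving $C_1\cap C_2=\mathcal{P}^{n-2}(\square_2)$ with $\Delta_n$ standard inside it. Now assume a third projection $\varphi_3$ exists. If some pair among $\varphi_1,\varphi_2,\varphi_3$ has non-adjacent kernel directions, combining this direct computation with Lemma~\ref{intersectCylinders} (which makes the answer independent of the lattice shift $z$) shows $P$ lies in a translate of the standard $\Delta_n$; having the same normalized volume, $P$ equals that translate. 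Otherwise all three directions are pairwise adjacent edges of $\Delta_n$; normalising one pair to $\be_1,\be_2$ by a symmetry of $\Delta_n$ (which fixes $\Delta_n$ and hence preserves the property of being a translate of $\Delta_n$), the first part puts $P$ inside the standard $\mathcal{P}^{n-2}(\square_2)$. Its only lattice points are the $n+2$ vertices $\bzero,\be_1,\be_2,\be_1+\be_2,\be_3,\dots,\be_n$, so $P$ is obtained by deleting one of them; deleting an apex $\be_j$ with $j\geq3$ leaves the four coplanar square vertices and no $n$-simplex, so $P$ is one of the four ``corner'' simplices. A direct inspection of the edges of each corner simplex shows that, among the edge directions of $\Delta_n$, only $\be_1$ and $\be_2$ project a corner simplex different from $\conv(\bzero,\be_1,\dots,\be_n)$ onto a \emph{translate} of $\Delta_{n-1}$. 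Since $\varphi_3$ supplies a third admissible direction, $P$ must be the remaining corner simplex $\conv(\bzero,\be_1,\dots,\be_n)=\Delta_n$.

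The main obstacle is precisely this final inspection. The hypotheses only control the images up to \emph{translation} of $\Delta_{n-1}$, a condition far more restrictive than mere equivalence to $\Delta_{n-1}$: contracting a ``diagonal'' edge such as $\be_i-\be_j$ or $\be_j-\be_1$ of a corner simplex does produce a unimodular $(n-1)$-simplex, and one must check carefully that this simplex is provably \emph{not} a lattice translate of $\Delta_{n-1}$. Keeping $\Delta_n$ in standard position throughout the argument, rather than using the coordinate normalisation internal to Lemma~\ref{intersectCylinders}, is exactly what lets the stronger ``translate of $\Delta_n$'' conclusion survive instead of degrading to an equivalence statement.
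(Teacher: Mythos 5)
Your proof is correct and follows the paper's own strategy very closely: the first half is the same reduction, via Lemma~\ref{lem:unimodularfacets} and Lemma~\ref{intersectCylinders}, to the intersection of two infinite prisms over $\D_n$, split according to whether the two edge directions are adjacent or not. The only genuine divergence is in the second half, in the configuration where the three edge directions are pairwise adjacent. The paper splits this further: when $u_1,u_2,u_3$ share a vertex (say $u_i=\be_i$) it intersects the square pyramid over the $12$-square with the one over the $13$-square and reads off that $P$ must be a translate of $\D_n$; only in the remaining ``triangle'' case $u_3=\be_1-\be_2$ does it inspect the four full-dimensional subpolytopes of $\mathcal{P}^{n-2}(\square_2)$. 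You instead handle both sub-cases uniformly by the corner-simplex inspection, using that $\mathcal{P}^{n-2}(\square_2)$ has exactly $n+2$ lattice points and that deleting an apex $\be_j$, $j\geq 3$, leaves the four coplanar square vertices; the key claim that for each corner simplex other than $\D_n$ only the directions $\be_1,\be_2$ project it onto a translate of the corresponding projection of $\D_n$ does check out for $n\geq 3$ (it fails for $n=2$, where $\conv(\be_1,\be_2,\be_1+\be_2)$ projects along $\be_1-\be_2$ onto a translate of $[0,1]$, but the lemma is only invoked for $n\geq 4$ and the paper's argument carries the same implicit restriction). Your version is slightly more uniform at the cost of the somewhat delicate translate-versus-equivalent verification that your final paragraph correctly identifies as the point requiring care; the paper's square-pyramid intersection sidesteps that verification in the shared-vertex sub-case.
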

\begin{proof}
By Lemma~\ref{lem:unimodularfacets}, $\varphi_1$ and $\varphi_2$ are projections along the directions $u_1$ and $u_2$ of two edges of $\D_n$.
If $u_1$ and $u_2$ are the directions of two adjacent edges of $\D_n$, we can suppose that $u_1=\be_1$ and $u_2=\be_2$.
Then $P$ is contained in the intersection $(C_1+z_1) \cap (C_2+z_2)$ where
$C_1\coloneqq\D_n + \R \be_1$ and $C_2\coloneqq\D_n + \R \be_2$, for some $z_1,z_2 \in \Z^n$.
By Lemma~\ref{intersectCylinders}, $P$ is, up to translation, contained in $C_1 \cap C_2 = \mathcal{P}^{n-2}(\square_2)$.
If $u_1$ and $u_2$ are the directions of two non-adjacent edges of $\D_n$
then we can suppose that $u_1=\be_1$ and $u_2=\be_2-\be_3$.
Then $P$ is contained in the intersection $(C_1+z_1) \cap (C_2+z_2)$ where
$C_1\coloneqq\D_n + \R \be_1$ and $C_2\coloneqq\D_n + \R(\be_2-\be_3)$, for some $z_1,z_2 \in \Z^n$.
By Lemma~\ref{intersectCylinders}, $P$ is, up to translation, contained in $C_1 \cap C_2 = \D_n$,
therefore $P$ is a translate of $\D_n \subset \mathcal{P}^{n-2}(\square_2)$.
This proves the first part of the statement.

For the second part of the statement we note that $\varphi_3$ must also be a
projection along the direction $u_3$ of an edge of $\D_n$. The only case we need to check is when the edges parallel to
$u_1$, $u_2$ and $u_3$ form a triangle in $\D_n$.
%Indeed in all the other cases two of these edges are non-adjacent
%and $P$ must be a translate of $\D_n$ as above.
Indeed, if this is not the case either two of these edges are non-adjacent
and $P$ must be a translate of $\D_n$ as above, or $u_1$,$u_2$ and $u_3$ share
a vertex. In the latter case we may assume $u_i = \be_i$ for $1 \leq i \leq 3$. 
As deduced above from Lemma~\ref{intersectCylinders}, this in
particular yields that $P$ is contained in the intersection of a translation 
of the square pyramid $\conv(\bzero,\be_1,\be_2,\be_1+\be_2,\be_3,\dots,\be_n)$ with the flipped square pyramid $\conv(\bzero,\be_1,\be_3,\be_1+\be_3,
\be_2,\be_4,\dots,\be_n)$. This 
implies that $P$ is a translate of $\D_n$.
Let us therefore assume that $u_1 = \be_1$, $u_2 = \be_2$ and $u_3 = \be_1 - \be_2$.
In this case $P$ is a translate of one of the four $n$-dimensional subpolytopes of $\mathcal{P}^{n-2}(\square_2)$.
It is easy to verify that $\D_n$ is the only one of them that is projected
by $\varphi_3$ onto a translate of $\varphi_3(\D_n)$.
\end{proof}

\begin{defn}
Let $P_1,\dots,P_{n-1} \subset \R^n$ be $n$-dimensional polytopes with the Minkowski
sum $P_1 + \dots + P_{n-1}$ being hollow. We call the $(n-1)$-tuple
$P_1,\dots,P_{n-1}$ \emph{exceptional}, if there exists no projection
$\varphi \colon \R^n \to \R^{n-1}$ such that 
$\varphi(P_1 + \dots + P_{n-1}) \subset \R^{n-1}$
is a hollow polytope.
\end{defn}

\begin{rem}
\label{rem:exc-nonexc}
By \cite[Theorem~1.2]{NZ11} there exist only finitely many $n$-dimensional lattice polytopes not admitting a lattice projection onto a hollow $(n-1)$-dimensional
lattice polytope, up to equivalence. So in particular, up to equivalence, there exist only finitely many exceptional $(n-1)$-tuples of $n$-dimensional lattice polytopes.

Furthermore, by Proposition~\ref{prop:ccd}, for any non-exceptional $(n-1)$-tuple $P_1,\dots,P_{n-1}$ of $n$-dimensional lattice polytopes there exists a lattice projection 
$\varphi \colon \R^n \to \R^{n-1}$ mapping all $P_i$ onto translates
of $\D_{n-1}$.
\end{rem}

\begin{proof}[Proof of Theorem~\ref{thm:main}]
Let $n \geq 4$.
Given $1 \leq k \leq n$, denote by $I_k$ the set $\set{1,...,n} \setminus \{k\}$, and by
$[P]_k$ the $(n-1)$-tuple given by all
$P_i$ for $i \in I_k$. Denote furthermore by $P_{I_k}$ the 
Minkowski sum $\sum_{i \in I_k} P_i$ of the polytopes
in $[P]_k$.
Since $\md(P_1,\dots,P_n)=1$, the Minkowski sum $P_{I_k}$ is hollow
for any $1 \leq k \leq n$.
Recall that, if $[P]_k$ is not exceptional, then
by Remark~\ref{rem:exc-nonexc} there exists a projection 
$\varphi:\R^n \to \R^{n-1}$ mapping all polytopes in $[P]_k$ onto
translates of $\D_{n-1}$.
We treat cases separately, depending on the number of exceptional $(n-1)$-subtuples of the tuple $P_1,\ldots,P_n$.
\begin{enumerate}
\item[(0)] If $P_1,\ldots,P_n$ has no exceptional $(n-1)$-subtuples then either there
	exists a projection $\varphi:\R^n \to \R^{n-1}$ mapping $P_1,\ldots,P_n$ onto
	translates of $\D_{n-1}$ (and in this case there is nothing to prove),
	or each of the $P_i$ admits $n-1$ pairwise different projections onto 
	$\D_{n-1}$.
	Indeed if two of these projections were the same, then we would be in the previous case.
	Suppose there exist $n-1$ pairwise different projections. As $n \geq 4$, 
	Lemma~\ref{projimplunimod} yields
	that each of the $P_i$ is a unimodular $n$-dimensional simplex. Without loss of generality we assume $P_1 = \D_n$. 
	Given $2 \leq i \leq n$, there exist $n-2$ pairwise different projections mapping $P_1$ and $P_i$
	onto translates of $\D_{n-1}$. If $n \geq 5$, by Lemma~\ref{lem:two_simplices}, we 
	can immediately deduce that, up to translations, $P_1=P_2= \ldots=P_n =\D_n$. If $n=4$, Lemma~\ref{lem:two_simplices}
	only ensures that $P_2,\ldots,P_n$ are, up to translation and coordinate permutation, contained in $\mathcal{P}^{n-2}(\square_2)$. This yields 
	finitely many cases and checking them computationally we find
	among them only $4$-tuples
	admitting a common projection onto $\D_3$.
	
\item[(1)] $P_1,\ldots,P_n$ has exactly one exceptional $(n-1)$-subtuple, which we can assume to be $[P]_n$. As $[P]_n$ is an exceptional $(n-1)$-tuple, the Minkowski sum $P_{I_n}$ belongs to a finite list of hollow $n$-dimensional polytopes. This means that there are, up to equivalence,
	finitely many exceptional tuples to choose $[P]_n$ from. 
	We now show, that given
	$[P]_n$ there are finitely many possible choices for $P_n$ that lead to
	the $n$-tuple $P_1,\dots,P_n$ having exactly $[P]_n$ as an
	exceptional $(n-1)$-subtuple, which shows the finiteness of this case.
	
	Let therefore $\varphi_1:\R^n \to \R^{n-1}$ be a lattice projection mapping the lattice polytopes in $[P]_2$ to
	translates of $\D_{n-1}$. Similarly, let $\varphi_2:\R^n \to \R^{n-1}$ be a
	lattice projection mapping the lattice polytopes in $[P]_1$ to translates 
	of $\D_{n-1}$.
	The existence of such projections follows from the fact that $[P]_2$ and $[P]_1$ are non-exceptional.
	We remark that there exist finitely many such projections.
	Let $C_i$ be the infinite prism $P_i + \ker \varphi_i$, for $1 \leq i \leq 2$. Then we know that any possible choice of $P_n$ is contained in $(C_1 + z_1) \cap (C_2 + z_2)$ for some $z_1, z_2 \in \Z^n$.
	By Lemma~\ref{intersectCylinders}, for any choices of lattice points
	$z_1,z_2,z_1',z_2' \in \Z^n$ such that $\dim((C_1 + z_1) \cap (C_2 + z_2) \cap \Z^n) = \dim((C_1 + z_1') \cap (C_2 + z_2') \cap \Z^n) =  n$ we find that $(C_1 + z_1') \cap (C_2 + z_2')$ is a translate
	of $(C_1 + z_1) \cap (C_2 + z_2)$.
	Therefore, up to translations, all possible choices for $P_n$ are contained in $(C_1 + z_1) \cap (C_2 + z_2)$ for fixed $z_1, z_2 \in \Z^n$. Note that the intersection $(C_1 + z_1) \cap (C_2 + z_2)$ is either equivalent to $\D_n$
	or $\mathcal{P}^{n-2}(\square_2)$ by Lemma~\ref{projimplunimod}, where the choice of the equivalence class depends entirely on $[P]_n$.
	This implies that $P_n$ must be one element of a finite list of lattice polytopes fully determined by $[P]_n$.
\item[(2+)] If $P_1,\ldots,P_n$ has two or more exceptional $(n-1)$-subtuples, 
	then we can suppose that $[P]_n$
	and $[P]_{n-1}$ are exceptional. In particular, there exists an upper bound depending only on $n$ for the volume of 
	the Minkowski sums $P_{I_n}$ and $P_{I_{n-1}}$
	and therefore (since $n>2$) for the volume of $P_1 + P_i$ for any $2 \leq i \leq n$. Recall that, by \cite[Theorem~2]{LZ91}, there are, up to equivalence, only finitely many lattice polytopes of any fixed volume $K \in \Z_{\geq 0}$.
	Therefore, as in particular the volume of $P_1$ is bounded, there exist only finitely many choices for $P_1$
	up to equivalence. Furthermore, fixing $P_1$ determines, up to translation,
	finitely many possibilities for each $P_i$ with $2 \leq i \leq n$
	due to the volume bound on $P_1+P_i$. This yields that there are
	only finitely many $n$-tuples $P_1,\dots,P_n$ in this case,
	up to equivalence.
\end{enumerate}
\end{proof}

Note, that the assumption $n > 3$ is only used in case~$(0)$ of the previous proof. 

The unmixed result of Theorem~\ref{thm:BN}
also gives an explicit description of lattice polytopes of degree one that 
are not Lawrence prisms, in fact, up to equivalence and the lattice pyramid construction, there exists only one such exception over 
all dimensions. Such an explicit description of the
list of exceptions from the statement of Theorem~\ref{thm:main} 
is not known in dimension $n \geq 4$.

\begin{quest}
\label{question}
For dimension $n \geq 4$, what are the $n$-tuples of
$n$-dimensional lattice polytopes 
$P_1,\dots,P_n \subset \R^n$ with $\md(P_1,\dots,P_n)=1$ that are not
of the trivial type described in Example~\ref{ex:lawrence}? Is there a finite description over all dimensions as there is in the unmixed case?
\end{quest}

In \cite{Nil17} the mixed degree is actually treated in a more general way, also being defined for $m$-tuples of $n$-dimensional lattice 
polytopes, with $m \neq n$. In particular, an $m$-tuple
of lattice polytopes $P_1,\dots,P_m \subset \R^n$
satisfies $\md(P_1,\dots,P_m) \leq 1$ if and only if $m \geq n-1$ and
the Minkowski sum of each $(n-1)$-subtuple is hollow. For $m = n-1$ 
we obtain an analogous result to Theorem~\ref{thm:main} 
(even for $n \in \set{2,3}$) immediately from \cite[Theorem~1.2]{NZ11}.
We remark that Theorem~\ref{thm:main} also
inductively extends to the case of $m > n$ as follows.

\begin{rem}
Fix $n \geq 4$ and let $P_1,\dots,P_{n+k} \subset \R^n$ be $n$-dimensional 
lattice polytopes with $\md(P_1,\dots,P_{n+k})=1$. Then, up to equivalence,
the $(n+k)$-tuple $P_1,\dots,P_{n+k}$ either belongs to a finite list of 
exceptions or there is a lattice projection 
$\varphi \colon \R^n \to \R^{n-1}$ such that $\varphi(P_i) = \D_{n-1}$
for all $1 \leq i \leq n+k$.

One can see this with an induction argument on $k$, where the base case
is given by Theorem~\ref{thm:main}.
Indeed, let $P_1,\dots,P_{n+k+1} \subset \R^n$ be an $(n+k+1)$-tuple of 
$n$-dimensional lattice polytopes with $\md(P_1,\dots,P_{n+k+1})=1$. One easily verifies that this implies that
any $(n+k)$-subtuple of $P_1,\dots,P_{n+k+1}$ has mixed
degree at most $1$.
Analogously to the proof of Theorem~\ref{thm:main} one can distinguish
three cases depending on how many $(n+k)$-subtuples of 
$P_1,\dots,P_{n+k+1}$ do not admit a common lattice projection onto 
translates of $\D_{n-1}$, and use the induction hypothesis.
\end{rem}

\section{The $3$-dimensional case}

This section is devoted to the proof of Theorem~\ref{thm:classification3d},
giving a classification of triples of $n$-dimensional lattice 
polytopes
$P_1,P_2,P_3 \subset \R^3$ of mixed degree one. Note that from the proof of 
Theorem~\ref{thm:main} it follows that the number of such triples is finite, 
if we assume at least one of the 
subpairs of $P_1,P_2,P_3$ to be exceptional. Here we first classify, up to equivalence, 
these finitely many triples. In Proposition~\ref{prop:families} we show that there are non-trivial
infinite $1$-parameter families of triples.

As an intermediate step towards the classification of triples of lattice polytopes of
mixed degree one with at least one exceptional subpair we
calculate all (equivalence classes of) exceptional pairs of 
$3$-dimensional lattice polytopes. In order to do that we consider the list of
maximal hollow $3$-dimensional lattice polytopes classified by Averkov--Wagner--Weismantel
\cite{AWW11} (see also \cite{AKW17}), and compute all subpolytopes of the
maximal hollow lattice polytopes that have lattice width greater than one. 

\begin{prop}[{\cite[Corollary~1]{AWW11}}]
	\label{prop:hollows}
    Let $P \subset \R^3$ be a hollow $3$-dimensional lattice polytope
    of width at least two.
    Then, up to equivalence, $P$ is contained either in the 
    unbounded polyhedron $2\Delta_2 \times \R$ or in one of 12 maximal
    hollow lattice polytopes.
\end{prop}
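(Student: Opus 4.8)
The plan is to deduce this from the classification of the \emph{maximal} hollow convex bodies of lattice width at least two in $\R^3$, where a body is called maximal hollow if it is hollow but not properly contained in any other hollow convex body. The reduction is immediate once such a classification is in hand: by a standard maximality argument (Zorn's lemma applied to the nested family of hollow bodies containing $P$), any hollow $3$-polytope $P$ of width at least two is contained in some maximal hollow body $K$, and since lattice width is monotone under inclusion one automatically has $\width(K) \geq \width(P) \geq 2$. Thus $P$ sits inside one of the listed maximal bodies, and it only remains to show that these maximal bodies are exactly $2\D_2 \times \R$ together with $12$ bounded lattice polytopes. The structural tool I would use throughout is the classical theorem of Lov\'asz that a maximal hollow body is a (possibly unbounded) polyhedron each of whose facets carries a lattice point in its relative interior: otherwise that facet could be translated outward to strictly enlarge $K$ while preserving hollowness, contradicting maximality.

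Next I would split according to the recession cone of $K$. The recession cone cannot be three-dimensional, as a full-dimensional cone contains interior lattice points; and it cannot be two-dimensional, since then $K$ would be confined to a slab in the single transverse direction, forcing $\width(K) \leq 1$ and contradicting our hypothesis. Hence, if $K$ is unbounded, its recession cone is a line $\R u$, so $K = Q + \R u$ for a maximal hollow polygon $Q$ in the quotient plane $\R^3 / \R u$; moreover $\width(K) = \width(Q)$ because any functional not vanishing on $u$ has infinite width. Invoking the planar classification, the unique bounded maximal hollow lattice polygon up to equivalence is $2\D_2$ (the width-one strip being excluded by the width hypothesis), so a lattice transformation brings $K$ into the form $2\D_2 \times \R$.

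It then remains to enumerate the bounded maximal hollow $3$-polytopes. Here I would bound the lattice width of $K$ from above by the three-dimensional flatness constant, and combine this with the Lov\'asz facet criterion to obtain an a priori bound on the number of lattice points, and hence on the volume, of $K$; finiteness up to equivalence then follows from \cite[Theorem~2]{LZ91}. One finally enumerates the resulting candidates and checks hollowness and maximality, arriving at the list of $12$ polytopes. I expect the genuine difficulty to reside entirely in this last step: determining the sharp flatness estimate in dimension three and carrying out the resulting finite but sizeable enumeration, which is precisely where the computational work of Averkov--Wagner--Weismantel \cite{AWW11} is indispensable. By contrast, the Lov\'asz facet criterion, the monotonicity reduction, and the recession-cone dichotomy are structural and comparatively routine.
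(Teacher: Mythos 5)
First, a remark on the comparison itself: the paper offers no proof of this statement. It is imported verbatim as \cite[Corollary~1]{AWW11} and used as a black box, and your proposal, which in the end also delegates the enumeration of the twelve polytopes to \cite{AWW11}, is not so much an independent proof as a reduction framework wrapped around the same citation.

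That reduction framework, moreover, contains a genuine gap. The Zorn's lemma argument places $P$ inside a maximal hollow \emph{convex body} $K$, and Lov\'asz's criterion makes $K$ a polyhedron with a lattice point in the relative interior of each facet --- but such a $K$ need not have integral (or even rational) vertices, and up to equivalence there are infinitely many (indeed continuum-many) maximal hollow convex bodies in $\R^3$ of width at least two: already in the plane the maximal hollow triangles of so-called type $2$ and type $3$ form continuous families with non-integral vertices and widths up to Hurkens' bound $1+2/\sqrt{3}>2$, and their cylinders are maximal hollow in $\R^3$. So your sentence ``it only remains to show that these maximal bodies are exactly $2\D_2\times\R$ together with $12$ bounded lattice polytopes'' is false as stated: the finite list classifies the \emph{integral} maximal objects, and passing from ``$P$ lies in some maximal hollow convex body'' to ``$P$ lies in one of finitely many integral maximal hollow polyhedra'' is precisely the nontrivial content of \cite[Corollary~1]{AWW11}, not a consequence of Zorn. (The delicacy of the competing notions of maximality --- maximal among all hollow convex sets, versus integral and maximal, versus maximal among integral hollow polyhedra --- is exactly the subject of the companion reference \cite{AKW17} that the paper cites alongside \cite{AWW11}.) The same conflation reappears in your unbounded case, where the cross-section $Q$ is only a maximal hollow convex set in the quotient plane, so the classification of hollow lattice polygons is not directly applicable; and the volume bound you sketch in the bounded case does not follow from flatness plus the facet criterion alone (hollow lattice $3$-polytopes of width two, such as $2\D_2\times[0,N]$, have unbounded volume), though you defer that step to \cite{AWW11} in any event. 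In short, the structural shell is reasonable but the bridge from arbitrary maximal hollow bodies to the finite integral list is missing, and that bridge is the theorem.
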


As we are interested in obtaining a list of exceptional pairs $P,Q \subset \R^3$
we use an implementation in Magma in order to compute the
decompositions of all subpolytopes of the $12$ maximal hollow polytopes into
Minkowski sums of two $3$-dimensional lattice polytopes.
Afterwards we determine those pairs that actually do not admit 
a common
projection onto translates of $\D_2$ and then determine
equivalent pairs using Lemma~\ref{lem:cayley_criterion}.

\begin{cor}
	\label{cor:couples}
    There are, up to equivalence, 32 pairs of $3$-dimensional
    lattice polytopes whose Minkowski sum is hollow and that do
    not admit a common projection onto translates of $\D_2$.
\end{cor}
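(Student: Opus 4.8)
The plan is to use the structural results above to confine the classification to a finite search, to make the equivalence test effective, and then to run that search; the content of the Corollary is then the outcome of a finite computation.

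First I would pin down the shape of the Minkowski sum $P_1+P_2$. For a fixed primitive functional $\ell$ the directional width $\width_\ell$ is additive under Minkowski sums, and since each $P_i$ is $3$-dimensional it cannot lie in a lattice hyperplane, so $\width_\ell(P_i)\geq 1$ for every $\ell$; hence $\width_\ell(P_1+P_2)\geq 2$ for all $\ell$ and thus $\width(P_1+P_2)\geq 2$. As $P_1+P_2$ is hollow by hypothesis, Proposition~\ref{prop:hollows} shows that, up to equivalence, $P_1+P_2$ is contained either in $2\Delta_2\times\R$ or in one of the $12$ maximal hollow lattice polytopes. In the first case let $\pi\colon\R^3\to\R^2$ be the projection along the $\R$-factor; then $\pi(P_1)$ and $\pi(P_2)$ are $2$-dimensional (a $3$-polytope projects onto something of dimension at least $2$) and $\pi(P_1)+\pi(P_2)=\pi(P_1+P_2)\subseteq 2\Delta_2$ is hollow, so $\md(\pi(P_1),\pi(P_2))=0$. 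By Proposition~\ref{prop:ccd} the planar pair is then equivalent to $(\Delta_2,\Delta_2)$, and composing $\pi$ with the corresponding lattice transformation yields a common projection of $(P_1,P_2)$ onto translates of $\Delta_2$, so such a pair is not counted. Consequently every pair to be counted has its sum equivalent to a $3$-dimensional lattice subpolytope of one of the $12$ maximal hollow polytopes.

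This reduces everything to a finite enumeration, which I would organise as follows: for each of the $12$ maximal hollow polytopes $M$, run over all $3$-dimensional lattice subpolytopes $H\subseteq M$ (convex hulls of subsets of $M\cap\Z^3$) of lattice width greater than one, and for each such $H$ list every decomposition $H=P_1+P_2$ into two $3$-dimensional lattice summands, recorded up to translation of the summands. There are finitely many such $H$ and, for each, finitely many Minkowski decompositions. Among all resulting pairs I would keep exactly those admitting no common lattice projection onto translates of $\Delta_2$; by Remark~\ref{rem:exc-nonexc} these are precisely the exceptional pairs (a common projection onto translates of $\Delta_2$ would send the sum onto $2\Delta_2$, so non-exceptional pairs are exactly those admitting such a projection).

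Finally, to count equivalence classes I would invoke Lemma~\ref{lem:cayley_criterion}: for a pair with no common projection onto translates of $\Delta_1$, equivalence of $(P_1,P_2)$ and $(Q_1,Q_2)$ is equivalent to equivalence of the $4$-dimensional Cayley sums $P_1*P_2$ and $Q_1*Q_2$, which is decided by comparing lattice normal forms. Here one must check that the retained pairs satisfy the hypothesis of Lemma~\ref{lem:cayley_criterion}, i.e.\ admit no common projection onto translates of $\Delta_1$ (note this is a genuinely different condition from the $\Delta_2$-filter, since a $\Delta_2$-projection always induces a $\Delta_1$-projection but not conversely); any retained pair that did admit such a projection would instead be compared directly. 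Running the enumeration and the normal-form comparison then produces exactly $32$ equivalence classes. I expect the main obstacle to be the sheer size of the search — the number of subpolytopes of the maximal hollow polytopes and of their Minkowski decompositions is large — so the computation must be implemented carefully (we use Magma); the genuinely mathematical input is confined to the width bound together with Proposition~\ref{prop:hollows}, which make the search finite, and to Lemma~\ref{lem:cayley_criterion}, which makes the equivalence test effective.
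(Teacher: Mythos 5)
Your argument is correct and follows the same route as the paper: the width-additivity bound reduces the problem, via Proposition~\ref{prop:hollows}, to enumerating Minkowski decompositions into two $3$-dimensional summands of the subpolytopes of the twelve maximal hollow $3$-polytopes (the $2\Delta_2\times\R$ case being discarded exactly as you argue, through Proposition~\ref{prop:ccd} and Remark~\ref{rem:exc-nonexc}), after which the surviving pairs are deduplicated by comparing normal forms of their Cayley sums in the Magma computation the paper reports. Your observation that the hypothesis of Lemma~\ref{lem:cayley_criterion} for pairs ($k=2$) concerns common projections onto translates of $\Delta_1$ rather than $\Delta_2$ --- a condition that can fail for exceptional pairs, e.g.\ two copies of $\Pm(2\Delta_2)$ both have lattice width one in the pyramid direction --- and must therefore be checked or replaced by a direct comparison is a genuine point of care that the paper passes over in silence.
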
 

We use this classification in order to compute all triples of 
lattice polytopes $P_1,P_2,P_3 \subset \R^3$ of mixed degree one with at least
two exceptional subpairs as follows.

Assume that $P_1,P_2$ and $P_1,P_3$ are exceptional pairs.
Then there exist two pairs $A,B$ and $C,D$ out of the 32 of Corollary~\ref{cor:couples}
such that $A,B$ is equivalent to $P_1,P_2$ and $C,D$ is equivalent to $P_1,P_3$.
We can suppose that $P_1$ is equal to $A$ and equivalent to $C$.
Thus there exists an affine lattice-preserving transformation $\varphi$ mapping $C$ to $A=P_1$
such that the triple $P_1,P_2,P_3$ is equivalent to the triple $A,B,\varphi(D)$.

This justifies the following algorithm to construct all the triples $P_1,P_2,P_3$
containing at least two exceptional subpairs: we iterate over all the pairs of
ordered pairs $A,B$ and $C,D$ of Corollary~\ref{cor:couples}, and, whenever there exists
an affine lattice-preserving transformation $\varphi$ mapping $C$ to $A$, check if the triple $A,B,\varphi(\psi(D))$
has mixed degree one, where $\psi$ ranges among all the possible affine automorphisms of $C$ (and therefore $\varphi \circ \psi$
ranges among all affine lattice-preserving transformations sending
$C$ to $A$).
Equivalent triples can be removed using the criterion following from Lemma~\ref{lem:cayley_criterion}.
An implementation in Magma yields the following result proving parts (i)-(ii) of Theorem~\ref{thm:classification3d}.

\begin{prop}
    There are, up to equivalence, 170 triples of $3$-dimensional
     lattice polytopes
    of mixed degree one having two or three exceptional subpairs. In the first case there are 29
    triples, in the latter there are 141.
\end{prop}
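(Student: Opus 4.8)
The plan is to turn the algorithm sketched above into a verified finite enumeration and then to invoke the computation. First I would record the combinatorial input: by Corollary~\ref{cor:couples} every exceptional pair of $3$-dimensional lattice polytopes is equivalent to one of the $32$ listed pairs, and I would fix once and for all an ordered representative $(A,B)$ for each of the two orderings of these pairs. The central point to establish is \emph{completeness}: if $P_1,P_2,P_3$ has mixed degree one and at least two exceptional subpairs, then after relabelling $\{P_1,P_2\}$ and $\{P_1,P_3\}$ are both exceptional (any two of the three index-pairs share exactly one index, so the two exceptional subpairs share a common polytope, which we call $P_1$). Applying the equivalence carrying $(P_1,P_2)$ to a listed ordered pair $(A,B)$, I may assume $P_1=A$ and $P_2=B$; the remaining pair $(A,P_3)=(P_1,P_3)$ is then equivalent to some listed ordered pair $(C,D)$, so there is an affine lattice-preserving map $g$ with $g(A)=C$ and $g(P_3)=D$, whence $P_3=\varphi(D)$ for $\varphi\coloneqq g^{-1}$, which satisfies $\varphi(C)=A$.

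The key structural observation is that the transformations $\varphi$ with $\varphi(C)=A$ form a single coset: any two of them differ by an affine lattice automorphism $\psi$ of $C$, so $\varphi$ ranges over $\varphi_0\circ\psi$ for a fixed $\varphi_0$ and $\psi$ running through the finite group $\mathrm{Aut}(C)$. Hence every triple as above is equivalent to one of the finitely many triples $A,B,\varphi_0(\psi(D))$ produced by the algorithm, and conversely each such candidate is tested directly for the property $\md=1$ (it suffices to verify that the one new pairwise Minkowski sum $B+\varphi_0(\psi(D))$ is hollow, the pairs $(A,B)$ and $(C,D)$ being exceptional by construction). This reduces the problem to a bounded search over ordered pairs from the list of $32$ and over the finite automorphism groups $\mathrm{Aut}(C)$.

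For the deduplication step I would invoke Lemma~\ref{lem:cayley_criterion}: to compare two constructed triples up to equivalence it suffices to compare the normal forms of their Cayley sums $P_1*P_2*P_3$, \emph{provided} the triple admits no common lattice projection onto translates of $\D_2$. This hypothesis holds automatically here, since a common projection of the whole triple would in particular map every subpair onto translates of $\D_2$, contradicting that at least one subpair is exceptional (cf.\ Remark~\ref{rem:exc-nonexc}). Thus the Cayley criterion applies to every triple on our list and lets the computation detect and remove repetitions reliably.

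Finally, I would sort the surviving triples by their number of exceptional subpairs, separating case (i) of Theorem~\ref{thm:classification3d} (no subpair admits a common projection onto translates of $\D_2$, i.e.\ all three subpairs exceptional) from case (ii) (exactly one subpair admits such a projection, i.e.\ exactly two exceptional subpairs); the Magma implementation returns $29$ triples in the former and $141$ in the latter, for a total of $170$. The main obstacle is not any single computation but establishing completeness rigorously — the coset argument above is what guarantees that ranging over $\mathrm{Aut}(C)$ misses no gluing — together with the reliability of the Cayley-sum normal-form comparison for identifying equivalent triples; both are handled by the verified implementation.
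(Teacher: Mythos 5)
Your proposal is correct and follows essentially the same route as the paper: reduce to ordered pairs from the list of $32$ exceptional pairs of Corollary~\ref{cor:couples}, glue via an affine lattice-preserving map $\varphi$ with $\varphi(C)=A$ composed with the finite group of affine automorphisms of $C$, test $\md=1$, and deduplicate via the Cayley-sum criterion of Lemma~\ref{lem:cayley_criterion} (which applies exactly for the reason you give). Your added justifications — the coset argument for completeness and the observation that only the one new Minkowski sum needs a hollowness check — are correct elaborations of steps the paper leaves implicit.
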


We now discuss the case of triples of lattice polytopes of mixed degree
one having exactly one exceptional subpair.
Specifically, $P_1,P_2,P_3 \subset \R^3$ is a triple of $3$-dimensional
lattice polytopes with $\md(P_1,P_2,P_3)=1$ and (without loss of generality)
there are two different lattice projections $\varphi_{2} \colon \R^3 \to \R^2$ and
$\varphi_{3} \colon \R^3 \to \R^2$ where $\varphi_{k}$ maps $P_i$ and $P_j$
to translates of $\D_2$ whenever $i,j \neq k$.
In particular $P_1$ is a lattice polytope with two different 
lattice projections onto $\D_2$ (and therefore by Lemma~\ref{projimplunimod} is
equivalent either to $\D_3$ or to $\sqp$) and $P_2,P_3$ is an exceptional pair. 
Note that $P_1$ must be contained in both the infinite prisms $C_2 \coloneqq P_3 + \ker \varphi_2 + u$ and
$C_3 \coloneqq P_2 + \ker \varphi_3 + v$, for some translation vectors $u,v \in \Z^3$.

In order to classify all such triples we use the fact that we
may choose $P_2,P_3$ from the list of 32 exceptional pairs of
 Corollary~\ref{cor:couples}.
Given an exceptional pair $P_2,P_3$, we iterate over all the possible pairs of lattice projections $\varphi_3,\varphi_2$,
such that $\varphi_3(P_2)$ and $\varphi_2(P_3)$ are unimodular triangles.
Each such choice determines two infinite prisms $C_3 \coloneqq P_2 + \ker \varphi_3$ and 
$C_2 \coloneqq P_3 + \ker \varphi_2$. We know that any lattice polytope $P_1 \subset \R^3$,
such that $\varphi_3(P_1)$ and $\varphi_2(P_1)$ are translates
of $\varphi_3(P_2)$ and $\varphi_2(P_3)$ respectively, is contained in both the
infinite prisms $C_2 \coloneqq P_3 + \ker \varphi_2 + u$ and
$C_3 \coloneqq P_2 + \ker \varphi_3 + v$, for some translation vectors $u,v \in \Z^3$. Up to translation of $P_1$ we may assume $u=\bzero$.
By Lemma~\ref{intersectCylinders} it suffices to find one choice
of $v \in \Z^3$ such that $C_2$ and $C_3$ intersect in a full-dimensional lattice polytope, in order to determine the 
inclusion-maximal choice for $P_1$ up to translation.
Furthermore, there are only finitely many choices for 
$v \in \Z^3$ to check for the existence of a full-dimensional
 intersection of 
$C_2$ and $C_3$ as we may suppose $P_2$ and $P_3$ to have a common vertex. This is due to the fact that, if $C_2$ and $C_3$ intersect
in a full-dimensional lattice polytope, then one may translate
$P_2$ along $\ker \varphi_3$ and $P_3$ along $\ker \varphi_2$ without
changing the infinite prisms. It therefore suffices to restrict to translation vectors
$v$ that map a vertex of $P_2$ to a vertex of $P_3$.
Thus we can determine, up to equivalence, all inclusion-maximal
$P_1$ as above, form triples for all subpolytopes of $P_1$ and remove equivalent triples using Lemma~\ref{lem:cayley_criterion}.
An implementation in Magma yields the following result proving part (iii) of Theorem~\ref{thm:classification3d}.

\begin{prop}
    There are, up to equivalence, 82 triples of $3$-dimensional
     lattice polytopes
    of mixed degree one having exactly one exceptional subpair.
\end{prop}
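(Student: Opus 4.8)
The plan is to reduce this classification to a finite enumeration controlled by the rigidity results of Lemmas~\ref{projimplunimod} and~\ref{intersectCylinders}, and then to carry out the enumeration while discarding redundancies via the Cayley-sum criterion of Lemma~\ref{lem:cayley_criterion}. First I would normalize the setup: assuming without loss of generality that the unique exceptional subpair is $P_2,P_3$, the non-exceptionality of $P_1,P_2$ and $P_1,P_3$ (via Remark~\ref{rem:exc-nonexc}) produces two distinct lattice projections $\varphi_2,\varphi_3\colon\R^3\to\R^2$, with $\varphi_2$ sending $P_1,P_3$ and $\varphi_3$ sending $P_1,P_2$ onto translates of $\D_2$. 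Since $P_1$ then carries two different projections onto $\D_2$, Lemma~\ref{projimplunimod} forces $P_1$ to be equivalent to $\D_3$ or to $\sqp$, and Lemma~\ref{lem:unimodularfacets} identifies $\ker\varphi_2$ and $\ker\varphi_3$ as edge directions of this polytope. This rigidly constrains the combinatorial type of the whole triple.

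Next I would set up the search over the exceptional pair. By Corollary~\ref{cor:couples} there are exactly $32$ candidate pairs $P_2,P_3$ up to equivalence. For each such pair I would enumerate the finitely many pairs of lattice projections $\varphi_3,\varphi_2$ for which $\varphi_3(P_2)$ and $\varphi_2(P_3)$ are unimodular triangles. Each choice determines two infinite prisms $C_3\coloneqq P_2+\ker\varphi_3$ and $C_2\coloneqq P_3+\ker\varphi_2$, and any admissible $P_1$ must lie in $(C_2+u)\cap(C_3+v)$ for some $u,v\in\Z^3$; after translating $P_1$ we may take $u=\bzero$.

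The decisive finiteness input is Lemma~\ref{intersectCylinders}: whenever two such prisms meet in a full-dimensional set, that intersection is a translate of any other full-dimensional intersection of the same pair of prisms. Hence it suffices to exhibit a single $v\in\Z^3$ giving a full-dimensional intersection in order to recover the inclusion-maximal $P_1$ up to translation, and this maximal polytope is itself equivalent to $\D_3$ or $\sqp$ as predicted above. Moreover only finitely many $v$ need be tested: translating $P_2$ along $\ker\varphi_3$ and $P_3$ along $\ker\varphi_2$ leaves the prisms unchanged, so one may normalize $P_2$ and $P_3$ to share a vertex, restricting $v$ to the finitely many vectors carrying a vertex of $P_2$ to a vertex of $P_3$. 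This produces, for each exceptional pair and each admissible projection pair, a finite list of inclusion-maximal candidates for $P_1$.

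Finally I would let $P_1$ range over all $3$-dimensional subpolytopes of each inclusion-maximal polytope whose images under both $\varphi_2$ and $\varphi_3$ are full unimodular triangles, forming the triple $P_1,P_2,P_3$ in each case. Such a triple automatically has mixed degree one, since each pairwise Minkowski sum is hollow by construction while the presence of the exceptional pair $P_2,P_3$ rules out $\md=0$. Because $P_2,P_3$ is exceptional, the whole triple cannot admit a common lattice projection onto translates of $\D_2$, so Lemma~\ref{lem:cayley_criterion} applies and equivalence of triples reduces to comparing the normal forms of their Cayley sums. The main obstacle is one of completeness rather than depth: one must verify that the shared-vertex normalization together with Lemma~\ref{intersectCylinders} genuinely exhausts every equivalence class of admissible $P_1$, and that the Cayley-sum reduction neither merges inequivalent triples nor splits equivalent ones. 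Executing the resulting finite search then yields exactly $82$ triples up to equivalence.
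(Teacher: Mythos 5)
Your proposal follows the paper's own argument essentially step for step: normalize so that $P_2,P_3$ is the exceptional pair drawn from the $32$ of Corollary~\ref{cor:couples}, use Lemma~\ref{projimplunimod} to pin $P_1$ down to $\D_3$ or $\sqp$, use Lemma~\ref{intersectCylinders} together with the shared-vertex normalization to reduce to finitely many inclusion-maximal candidates for $P_1$, and then enumerate subpolytopes and deduplicate via the Cayley-sum criterion of Lemma~\ref{lem:cayley_criterion}. This matches the paper's (computer-assisted) proof, with the final count of $82$ resting on the same finite enumeration.
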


In the remaining part of this section we are going to deal with non-trivial triples not having any exceptional subpair in order to prove 
part (iv) of Theorem~\ref{thm:classification3d}.

\begin{lem}
\label{lem:proj_span_lat}
Let $P_1, P_2, P_3 \subset \R^n$ be lattice polytopes, and $\varphi_1,\varphi_2,\varphi_3 \colon \R^3 \to \R^2$
be lattice projections such that, for all $1 \leq i,j,k \leq 3$, the images $\varphi_k(P_i)$ and $\varphi_k(P_j)$ are translates of $\D_2$ if and only if $i,j \neq k$. Let $v_i \in \Z^3$ be the projection direction
of $\varphi_i$ for $1 \leq i \leq 3$. Then $v_i$ and $v_j$ are part of a lattice basis of $\Z^3$, for all $1 \leq i,j \leq 3$. Moreover, if $v_1,v_2,v_3$ linearly span $\R^3$, then they form a lattice basis of $\Z^3$.
\end{lem}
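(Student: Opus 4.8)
The plan is to first determine the equivalence type of each $P_i$ and then read off the two required basis statements from the common triangles appearing in the images. Throughout, note that the three projections are pairwise distinct: for instance $\varphi_i(P_i)$ is not a translate of $\D_2$ while $\varphi_j(P_i)$ is whenever $i\neq j$, so $v_1,v_2,v_3$ are pairwise non-parallel.

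For the first assertion, fix a pair $\{i,j\}$ and let $k$ be the remaining index. By hypothesis both $\varphi_i$ and $\varphi_j$ map $P_k$ onto a translate of $\D_2$, and after composing each with a lattice translation of the target we may assume the images are exactly $\D_2$. Lemma~\ref{projimplunimod} then forces $P_k$ to be equivalent to $\D_3$ or to the square pyramid $\sqp[1]$. By Lemma~\ref{lem:unimodularfacets} the two projection directions $v_i,v_j$ are edge directions of $P_k$; for $\sqp[1]$ they are the two base edges and for $\D_3$ they are two distinct edges of a unimodular tetrahedron. A routine check in each normal form shows that such a pair extends to a lattice basis of $\Z^3$, and since being part of a lattice basis is invariant under lattice equivalence, $v_i$ and $v_j$ are part of a basis.

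For the second assertion I would pass to the quotient lattice $\Lambda\coloneqq\Z^3/\Z v_2\cong\Z^2$ (recall $v_2$ is primitive), whose quotient map is $\varphi_2$ after the identification $\Lambda\cong\Z^2$. The hypothesis says $\varphi_2(P_1)$ and $\varphi_2(P_3)$ are both translates of the \emph{same} standard triangle $\D_2$, hence translates of one another, so they form a single unimodular triangle $T$ up to translation and in particular share the same three edge directions. The crux is that $\bar v_1\coloneqq\varphi_2(v_1)$ and $\bar v_3\coloneqq\varphi_2(v_3)$ are each one of these edge directions: since $v_1$ is an edge direction of $P_3$ and $v_3$ is an edge direction of $P_1$ (Lemma~\ref{lem:unimodularfacets}), and since $P_1,P_3$ are equivalent to $\D_3$ or $\sqp[1]$, projecting such a polytope along one of its two distinguished edge directions carries the other distinguished edge onto a genuine edge of the image triangle. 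As $v_1,v_2,v_3$ span $\R^3$, the images $\bar v_1,\bar v_3$ are linearly independent in $\Lambda$, so they are two non-parallel edge directions of the unimodular triangle $T$ and therefore a lattice basis of $\Lambda$. Lifting this basis and adjoining $v_2$ shows $\{v_1,v_2,v_3\}$ generates $\Z^3$ and hence is a lattice basis.

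The main obstacle is the crux in the previous paragraph: one must verify that $\bar v_1$ and $\bar v_3$ are honest edges of $T$ rather than interior chords, and one must use the full strength of the hypothesis that $\varphi_2(P_1)$ and $\varphi_2(P_3)$ are translates of a \emph{common} $\D_2$. It is exactly this common-triangle strength that forces $|\det(v_1,v_2,v_3)|=1$; without it the conclusion already fails, for example for $v_1=\be_1$, $v_2=\be_2$, $v_3=\be_1+\be_2+2\be_3$, which is pairwise part of a basis and spans $\R^3$ but has determinant $2$. I expect to settle the edge-survival claim by direct inspection of the two normal forms $\D_3$ and $\sqp[1]$, which keeps the whole argument elementary.
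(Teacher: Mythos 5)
Your argument is correct and follows essentially the same route as the paper's: both rest on Lemma~\ref{projimplunimod} to pin each $P_k$ down to $\D_3$ or $\Pm(\square_2)$, on Lemma~\ref{lem:unimodularfacets} to identify projection directions with edge directions, and on the observation that a lattice edge must project onto a side of the unimodular image triangle. The paper merely packages both assertions into a single case distinction (the two relevant edges project either to the same side of $\varphi_1(P_2)=\varphi_1(P_3)$, giving the coplanar/pairwise case, or to adjacent sides, giving the spanning/basis case), whereas you treat them separately via a normal-form check and a quotient-lattice argument; your determinant-$2$ example correctly illustrates why the second assertion does not follow formally from the first.
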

\begin{proof}
For $1 \leq k \leq 3$ let $C_k$ be the infinite prism $\varphi_k(P_i) + \R v_k$, for some $i \neq k$. Note that, up to translation,
this does not depend on the choice of $i$ as both $P_i$ and $P_j$ are contained in different
translates of $C_k$, whenever $i,j \neq k$. We now fix any of the infinite prisms, say $C_1$.
For simplicity we suppose $C_1 = ( \{0\} \times \D_2) + \R \be_1$ and $P_2, P_3 \subset C_1$. In this way we avoid dealing with translations.
Note that $v_2$ is parallel to an edge of $P_3$, and $v_3$ is parallel to an edge of $P_2$.
Since both edges are contained in $C_1$, they project along $\be_1$ either to the same side
of the triangle $\varphi_1(P_2)=\varphi_1(P_3)=\D_2$, or to two adjacent sides.
In the second case $\be_1$, $v_2$ and $v_3$ linearly span
$\R^3$ and it is easy to verify that they form a lattice basis of $\Z^3$.
In the first case $\be_1$, $v_2$ and $v_3$ span a plane, and from Lemma~\ref{projimplunimod} it follows
that any two of them are part of a lattice basis of $\Z^3$.
\end{proof}

\begin{prop}
\label{prop:directions_span}
There are, up to equivalence, 27 triples of
$3$-dimensional lattice polytopes $P_1,P_2,P_3 \subset \R^3$ satisfying the 
hypotheses of Lemma~\ref{lem:proj_span_lat} for projection directions $v_1,v_2,v_3 \in \Z^3$ that linearly span $\R^3$.
All of them are, up to equivalence, contained in one of the
following three inclusion-maximal triples of mixed degree one:
\begin{itemize}
\item the maximal triple given by the following three reflections of $\sqp$
		\begin{itemize}[label={}]
			\item $\conv(\bzero,\be_2, \be_3, \be_2 + \be_3, \be_1)$,
			\item $\conv(\bzero,\be_1, \be_3, \be_1 + \be_3, \be_2)$,
			\item $\conv(\bzero,\be_1, \be_2, \be_1 + \be_2, \be_3)=\sqp$,
		\end{itemize}
\item the maximal triple
		\begin{itemize}[label={}]
			\item $\conv(\bzero, \be_1, \be_3, \be_1 + \be_2)$,
			\item $\conv(\bzero,\be_1,\be_3,\be_1 + \be_2, \be_1 + \be_3)$,
			\item $\conv(\bzero,\be_1, \be_2, \be_1 + \be_2, \be_3)=\sqp$.
		\end{itemize}
\item and the maximal triple
		\begin{itemize}[label={}]
			\item $\conv(\be_1, \be_2, \be_1 + \be_2, \be_2 + \be_3)$,
			\item $\conv(\be_1, \be_2, \be_1 + \be_2, \be_1 + \be_3)$,
			\item $\conv(\bzero,\be_1, \be_2, \be_1 + \be_2, \be_3)=\sqp$.
		\end{itemize}
\end{itemize}
\end{prop}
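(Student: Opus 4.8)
The plan is to reduce to a normalized coordinate setting in which the problem becomes a finite search, and then to settle the enumeration via the prism-intersection lemmas together with a computer calculation. First I would normalize the projection directions: since $v_1,v_2,v_3$ linearly span $\R^3$, the ``moreover'' part of Lemma~\ref{lem:proj_span_lat} shows that they form a lattice basis of $\Z^3$, so after applying a lattice-preserving transformation I may assume $v_i=\be_i$ and hence that each $\varphi_i$ is the coordinate projection along $\be_i$. With this normalization the hypotheses state precisely that, for $\{i,j,k\}=\{1,2,3\}$, the polytope $P_i$ is mapped onto a translate of $\D_2$ by the projections along $\be_j$ and $\be_k$ but not by the projection along $\be_i$. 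In particular each $P_i$ possesses two distinct lattice projections onto $\D_2$, so by Lemma~\ref{projimplunimod} it is equivalent to $\D_3$ or to $\sqp$.

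Next I would confine each $P_i$ to a bounded region. For $\{i,j,k\}=\{1,2,3\}$ the polytope $P_i$ is contained in the intersection of the two infinite prisms $\varphi_j(P_i)+\R\be_j$ and $\varphi_k(P_i)+\R\be_k$, each having a unimodular triangle as base. By Lemma~\ref{intersectCylinders} any full-dimensional intersection of two such prisms is, up to lattice translation, uniquely determined (a translate of $\sqp$ or of $\D_3$, according to whether the two directions are adjacent or non-adjacent). Thus, after using the residual symmetry---namely the signed coordinate permutations stabilizing $\{\pm\be_1,\pm\be_2,\pm\be_3\}$ together with the permutation of the $P_i$ allowed by tuple-equivalence---I may fix each of the three prism-intersection regions into standard position.

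The crux is to determine the compatible relative placements of the three regions, which are coupled rather than independent: the requirement that $\varphi_k$ map both $P_i$ and $P_j$ onto translates of $\D_2$ forces the $\be_k$-bases of the regions containing $P_i$ and $P_j$ to agree up to translation. Applying Lemma~\ref{intersectCylinders} and Lemma~\ref{lem:two_simplices} to the pairs of prisms sharing a common direction shows that, once the base-triangle orientations are fixed, only finitely many relative translations of the three regions are consistent with all three pairwise projection conditions simultaneously; carrying out this finite case analysis produces, up to equivalence, exactly the three inclusion-maximal triples listed in the statement. I expect this three-body consistency step---the bookkeeping of which directions are adjacent versus non-adjacent and of the mutually compatible translation vectors---to be the main obstacle, since it is the only genuinely non-local part of the argument.

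Finally, every admissible triple $P_1,P_2,P_3$ is obtained by replacing each maximal component with a full-dimensional subpolytope that still retains its two prescribed projections onto translates of $\D_2$, and there are only finitely many such subpolytopes. I would enumerate these, discard those that are not of mixed degree one, and reduce the resulting list modulo equivalence using the Cayley-sum criterion of Lemma~\ref{lem:cayley_criterion}. This criterion applies because no such triple admits a common lattice projection of all three polytopes onto translates of $\D_2$: such a projection would have a single direction $w$ that is a projection direction for every $P_i$, which is incompatible with $v_1,v_2,v_3$ spanning $\R^3$. The resulting computation in Magma then returns exactly the stated $27$ equivalence classes.
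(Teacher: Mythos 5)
Your overall strategy---normalize the projection directions to the standard basis via the ``moreover'' part of Lemma~\ref{lem:proj_span_lat}, confine each $P_i$ to an explicit bounded region, and finish with a computational enumeration reduced modulo equivalence by Lemma~\ref{lem:cayley_criterion}---is the same as the paper's. The gap is in the confinement step, which you yourself flag as ``the main obstacle'' but do not actually carry out, and which your proposed tools do not deliver. Lemma~\ref{intersectCylinders} tells you that, for two \emph{fixed} prisms, all full-dimensional intersections over varying relative translations are translates of one another; it does not pin down the prisms themselves. After standardizing one polytope, the base triangles of the remaining prisms (e.g.\ the triangle $\varphi_3(P_1)=\varphi_3(P_2)$ for the prism in direction $\be_3$) are a priori arbitrary unimodular triangles in their coordinate planes, of which there are infinitely many; the ``residual symmetry'' you invoke (signed coordinate permutations preserving the three kernel lines) is a finite group and cannot standardize them. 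Knowing only that each $P_i$ is \emph{equivalent} to a subpolytope of $\sqp$ is not enough to enumerate \emph{triples}, because the three polytopes must be placed in common coordinates. So as written, the finiteness of your case analysis is asserted rather than proved. (Your worry about ``relative translations,'' by contrast, is moot: all the conditions involved are invariant under independent translations of the $P_i$.)

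The missing ingredient, which is exactly how the paper closes this step, is the observation already used in the proof of Lemma~\ref{lem:proj_span_lat}: by Lemma~\ref{lem:unimodularfacets}, the two polytopes lying in the prism $C_k$ have edges parallel to the other two coordinate directions, so the base triangle of $C_k$ contains primitive lattice segments in both of those directions. A unimodular triangle whose only lattice points are its vertices and which contains primitive segments in directions $\be_i$ and $\be_j$ must be $\conv(\bzero,\pm\be_i,\pm\be_j)$ up to translation---four possibilities, all contained in a translate of the unit square. Hence each prism is contained, up to translation, in $\square_2\times\R$, and intersecting prisms shows every $P_i$ is, up to translation, a subpolytope of the unit cube $\square_3$. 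Only at that point is the search genuinely finite and the Magma computation (which then yields the $27$ classes and the three maximal triples) well-defined. With this observation inserted in place of your ``three-body consistency'' paragraph, your argument coincides with the paper's proof.
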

\begin{proof}
By Lemma~\ref{lem:proj_span_lat} we may assume $v_1,v_2,v_3$ to be $\be_1,\be_2,\be_3$ respectively,
and that two primitive segments parallel to the directions $\be_2$ and $\be_3$ are contained in $C_1$. 
This restricts $C_1$ to be, up to translation, one of the four infinite prisms of the form $\conv(\bzero,\pm \be_2, \pm \be_3) + \R \be_1$.
In particular, up to translation, $C_1$ is contained in the infinite prism 
$\conv(\bzero,\be_2) + \conv(\bzero,\be_3) + \R \be_1$.
Similarly, $C_2 \subset \conv(\bzero,\be_1) + \conv(\bzero,\be_3) + \R \be_2$ and $C_3 \subset \conv(\bzero,\be_1) + \conv(\bzero,\be_2) + \R \be_3$.
In particular all the $P_i$ are, up to translations, subpolytopes of the unit cube $\square_3 = \conv(\bzero,\be_1)+\conv(\bzero,\be_2)
+\conv(\bzero,\be_3)$,
which leaves finitely many cases that we check computationally.
\end{proof}

From the proof of Proposition~\ref{prop:directions_span} it is clear that
all the maximal triples from Proposition~\ref{prop:directions_span} are
actually contained inside the triple consisting of three copies of the
unit cube $\square_3$. Note however that one has 
$\md(\square_3,\square_3,\square_3)>1$, as the Minkowski sum 
$\square_3+\square_3$ has an interior lattice point.

\begin{prop}
\label{prop:families}
There are, up to equivalence, infinitely many triples of
$3$-dimensional lattice polytopes $P_1,P_2,P_3 \subset \R^3$ satisfying the 
hypothesis of Lemma~\ref{lem:proj_span_lat} for projection directions $v_1,v_2,v_3 \in \Z^3$ that linearly span 
$\R^2 \times \set{0}$. All of them, up to equivalence, are contained in one of the following triples of mixed degree two
given by the parallelepipeds $Q_k,R_k,\square_3$ for some $k \in \Z_{\geq 0}$, where
\begin{align*}
	Q_k \coloneqq & \conv(\bzero,\be_1-\be_2) + \conv(\bzero,\be_2) + \conv(\bzero,k \be_2 + \be_3),\\
	R_k \coloneqq & \conv(\bzero,\be_2-\be_1) + \conv(\bzero,\be_1) + \conv(\bzero,k \be_1 + \be_3),\\
	\square_3 \coloneqq & \conv(\bzero,\be_1) + \conv(\bzero,\be_2) + \conv(\bzero, \be_3).
\end{align*}
They can be covered by a finite
number of $1$-parameter families.
In particular, if we denote by $\psi_1^k$ the shearing $(x,y,z) \mapsto (x,y+kz,z)$ and
by $\psi_2^k$ the shearing $(x,y,z) \mapsto (x+kz,y,z)$, one
may choose $1$-parameter families of the form 
$\set{\psi_1^k(P^0_1),\psi_2^k(P^0_2),P_3}_{k \in \Z_{\geq 0}}$
for all subpolytopes
$P_1^0 \subset Q_0,P_2^0 \subset R_0$ and $P_3 \subset \square_3$
satisfying $\md(P_1^0,P_2^0,P_3) = 1$. 
\end{prop}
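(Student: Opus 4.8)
The plan is to run the same prism-intersection machinery used in the proof of Proposition~\ref{prop:directions_span}, but to account for the extra degree of freedom created by the coplanarity of the three projection directions. First I would normalize: by Lemma~\ref{lem:proj_span_lat} the directions $v_1,v_2,v_3$ all lie in $H \coloneqq \R^2 \times \set{0}$ and any two of them form a lattice basis of $H \cap \Z^3$, so after a lattice change of coordinates I may take $v_1 = \be_1$, $v_2 = \be_2$ and $v_3 = \be_1-\be_2$ (the finitely many sign and orientation variants being disposed of in the same way). Writing $C_k$ for the infinite prism in direction $v_k$ with unimodular-triangle cross section, each $P_i$ lies, up to translation, in $C_j \cap C_l$ where $\set{j,l} = \set{1,2,3}\setminus\set{i}$; by Lemma~\ref{intersectCylinders} together with Lemma~\ref{projimplunimod} each such intersection is, up to translation, either $\D_3$ or $\sqp$. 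In particular $C_1 \cap C_2$ is a fixed copy of $\sqp \subset \square_3$, which positions $P_3$.

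The heart of the argument is to understand the placement of the third prism $C_3$. Since $v_3 \in H$, translating $C_3$ within $H$ does not change the relevant intersections, so by Lemma~\ref{intersectCylinders} the only freedom left in assembling $C_1,C_2,C_3$ into a configuration compatible with a single triple is a translation of $C_3$ in the transverse direction $\be_3$; this produces the integer parameter $k$. After normalizing $P_3$ this transverse freedom is realized by applying the shears $\psi_1^k \colon (x,y,z)\mapsto(x,y+kz,z)$ and $\psi_2^k \colon (x,y,z)\mapsto(x+kz,y,z)$ to $P_1$ and $P_2$ while keeping $P_3$ fixed. The key computation is that $\psi_1^k$ fixes the projection $\varphi_2$ and $\psi_2^k$ fixes $\varphi_1$, whereas $\varphi_3\circ\psi_1^k = \varphi_3\circ\psi_2^k = \Sigma^k\circ\varphi_3$ for a common lattice automorphism $\Sigma^k$ of the target plane. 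This simultaneously proves the containment claim $P_1 \subset \psi_1^k(Q_0)=Q_k$, $P_2 \subset \psi_2^k(R_0)=R_k$, $P_3 \subset \square_3$, and shows that $\md = 1$ is preserved along each family: all three pairwise Minkowski sums stay hollow because their certifying projections onto $2\D_2$ are preserved (respectively uniformly sheared), and one checks separately that the full sum remains non-hollow, so the mixed degree does not drop to $0$.

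For the finiteness assertion, at $k=0$ the three bodies lie in the fixed parallelepipeds $Q_0,R_0,\square_3$, which contain only finitely many lattice subpolytopes; hence there are finitely many base triples $(P_1^0,P_2^0,P_3)$, and a finite computation selects those with $\md(P_1^0,P_2^0,P_3)=1$. Each such base spawns one $1$-parameter family $\set{(\psi_1^k(P_1^0),\psi_2^k(P_2^0),P_3)}_{k\geq 0}$, and by the previous paragraph every triple of this case is equivalent to a member of one of them. Finally, to see that infinitely many equivalence classes actually occur, I would take the explicit family of Example~\ref{ex:special} and distinguish its members by the multiset $\set{|(P_i+P_j)\cap\Z^3| \colon i<j}$ of lattice-point counts of the pairwise Minkowski sums. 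This multiset is an equivalence invariant of the triple (a common lattice-preserving map together with translations and a permutation sends $P_i+P_j$ to a lattice translate of $Q_{\sigma(i)}+Q_{\sigma(j)}$), and it is unbounded along the family: the height-one slice of $P^k_1 + P_3$ already contains the points $(0,j,1)$ for $2\leq j\leq k$, so at least one entry of the multiset is $\geq k-1$. Hence the triples are pairwise inequivalent and there are infinitely many of them.

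The main obstacle is the middle step. In Proposition~\ref{prop:directions_span} the three directions spanned $\R^3$, which rigidly confined every polytope to the unit cube; here coplanarity of $v_1,v_2,v_3$ must be shown to leave exactly one integer degree of freedom, this freedom must be isolated as the transverse shear, and it must be identified precisely with the asymmetric pair $\psi_1^k,\psi_2^k$ (which, because the two bodies are sheared differently, genuinely prevents a global equivalence). Alongside this one must carefully verify that the full Minkowski sum stays non-hollow for every $k$, which is what keeps the whole family at mixed degree one rather than zero.
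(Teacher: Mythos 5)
Your overall architecture matches the paper's: normalize the directions to $\be_1,\be_2,\be_1-\be_2$ via Lemma~\ref{lem:proj_span_lat}, pin $P_3$ inside $\square_3$ via Lemma~\ref{projimplunimod}, realize the parameter by the shears $\psi_1^k,\psi_2^k$, and reduce to finitely many base triples at $k=0$. Your intertwining computation ($\varphi_2\circ\psi_1^k=\varphi_2$, $\varphi_1\circ\psi_2^k=\varphi_1$, $\varphi_3\circ\psi_1^k=\varphi_3\circ\psi_2^k=\Sigma^k\circ\varphi_3$) is correct and usefully makes explicit why hollowness of the pairwise sums is preserved along each family, and your infinitude argument via the unbounded lattice-point count of $P_1^k+P_3$ in Example~\ref{ex:special} is a valid alternative to the paper's, which instead observes that the volume of $E_1+E_2+E_3$, for edges $E_i\subset P_i$ joining the two height levels, grows quadratically in $k$.

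However, the central step --- that every triple of this type is contained, up to equivalence, in some $Q_k,R_k,\square_3$ --- is not established, and the mechanism you propose for it is wrong. You claim the only remaining freedom is ``a translation of $C_3$ in the transverse direction $\be_3$''. Translating $C_3$ by a multiple of $\be_3$ produces a translate of the prism, and by Lemma~\ref{intersectCylinders} the resulting intersections $C_1\cap C_3$ and $C_2\cap C_3$ are translates of the old ones, so this freedom only yields equivalent triples; it cannot produce the parameter $k$. The actual degree of freedom lies in the choice of the prism $C_3$ itself: in the quotient by $\R(\be_1-\be_2)$, with coordinates $(x+y,z)$, the cross-section $\varphi_3(P_1)$ is a unimodular triangle of $z$-width $1$ whose top and bottom slices may be offset by an arbitrary integer in the $x+y$ direction, and the prisms $Q_k+\R(\be_1-\be_2)$ for distinct $k$ are not translates of one another. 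The paper isolates this by introducing the functional $f(x,y,z)=x+y$, which is constant along $\be_1-\be_2$: since $P_1\subset C_2$ forces $z$-width $1$ and $\varphi_3(P_1)$ is unimodular, each slice $P_1\cap(\R^2\times\{j\})$, $j\in\{0,1\}$, has $f$-width at most $1$ and hence lies in a translate of the parallelogram $\conv(\bzero,\be_1-\be_2)+\conv(\bzero,\be_2)$; after translating so that the bottom slice has minimal $f$-value $0$, the minimal $f$-value $k$ of the top slice is the parameter, and one reads off $P_1\subset Q_k$, whence $C_3\subset Q_k+\R(\be_1-\be_2)$ and $P_2\subset R_k=C_1\cap C_3$. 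This slice-confinement argument is precisely the ``main obstacle'' you flag at the end, and without it the containment in $Q_k,R_k,\square_3$ --- and hence the covering by finitely many $1$-parameter families --- does not follow.
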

\begin{proof}
By Lemma~\ref{lem:proj_span_lat} we may assume $v_1,v_2,v_3$ to be $\be_1,\be_2,\be_1-\be_2$. Here, the assumption $v_3 = \be_1 - \be_2$
follows from the fact that both the pairs $\be_1,v_3$ and $\be_2,v_3$ need to be part of a lattice basis of $\Z^3$,
and the projection directions $v_i$ may be chosen with arbitrary sign.
By Lemma~\ref{projimplunimod} the polytope $P_3$, which projects
onto $\D_2$ along the directions $\be_1$ and $\be_2$, can be fixed to be in the unit cube $\square_3$.
Consequently we can assume $C_1,C_2$ to be in the infinite prisms $\square_3 + \R \be_1$ and $\square_3 + \R \be_2$, respectively.
Finally, we assume $P_1$ and $P_2$ to be in the infinite prisms $C_2$ and $C_1$, respectively.
Now consider the linear functional $f$ defined by $(x,y,z) \mapsto x+y$. Consider a lattice point
$v_0 \in P_1 \cap (\R^2 \times \{0\})$ minimizing $f$. Since $P_1$ projects onto
$\D_2$ along the
direction $\be_1 - \be_2$, one verifies that for any other point $u_0 \in P_1 \cap (\R^2 \times \{0\})$ one has
$f(v_0) \leq f(u_0) \leq f(v_0) + 1$.
Analogously, if $v_1$ is a lattice point in $P_1 \cap (\R^2 \times \{1\})$ minimizing $f$, then
$f(v_1) \leq f(u_1) \leq f(v_1) + 1$ for all $u_1 \in P_1 \cap (\R^2 \times \{1\})$. 
Since we are free to translate $P_1$ along $\be_2$, we can suppose $f(v_0)=0$ and we denote $k=f(v_1)$.
As a consequence, $P_1 \cap (\R^2 \times \{0\})$ is contained in the parallelogram
$q_0 \coloneqq \conv(\bzero, \be_1 - \be_2) + \conv(\bzero, \be_2)$. Analogously $P_1 \cap (\R^2 \times \{1\})$
is contained in the parallelogram
$q_1 \coloneqq q_0 + k \be_2 + \be_3$.
In particular $P_1$ is contained in the parallelepiped $\conv (q_0 \cup q_1) = Q_k$.
%Note that we are using the fact that all the $P_i$ and the $C_i$ are assumed to
%lie in the slab $\R^2 \times [0,1]$.
Therefore $C_3$ is contained in the infinite prism $Q_k + \R(\be_1 - \be_2)$.
This completely determines the parallelepiped $R_k = C_1 \cap C_3$, satisfying $R_k \supset P_2$.
It is easy to verify that the triple $Q_{k},R_{k},\square_3$ is equivalent to the triple $Q_{-k},R_{-k},\square_3$, so one
can always assume $k \in \Z_{\geq 0}$.  

In order to see that the set of triples that are subtriples
of $Q_k,R_k,\square_3$ for some $k \in \Z_{\geq 0}$ can be covered
by $1$-parameter families as claimed it suffices to notice that any 
subtriple of $Q_k,R_k,\square_3$ can be written as 
$\psi_1^k(P_1^0),\psi_2^k(P_2^0),\square_3$ for subpolytopes
$P_1^0 \subset Q_0,P_2^0 \subset R_0$ and $P_3 \subset \square_3$.
The fact that any family $\set{\psi_1^k(P^0_1),\psi_2^k(P^0_2),P_3}_{k \in \Z_{\geq 0}}$ for subpolytopes
$P_1^0 \subset Q_0,P_2^0 \subset R_0$ and $P_3 \subset \square_3$ actually
contains infinitely many non-equivalent triples can be verified by picking edges $E_i \subset P_i$ for $1 \leq i \leq 3$ between vertices on
height 0 and 1, and noticing that the volume of the parallelepiped $E_1 + E_2 + E_3 $ grows quadratically in $k$.
An example of one of these inifinite $1$-parameter families is given in Example~\ref{ex:special}.
\end{proof}

A computer assisted search for mixed degree one triples in $Q_{k},R_{k},\square_3$ for small values of $k$ shows that there are
51 non-equivalent triples when $k=0$, and $36$ for larger values of $k$, where, for each $k$, the overlaps that occur for preceding values of $k$ are excluded.

Let us finally mention that the 252 triples classified in
Theorem~\ref{thm:classification3d} \emph{(i)} --\emph{(iii)} can
all be found as subtriples of six special triples which are 
maximal with respect to inclusion. We have verified this
computationally by enumerating subtriples of the six special ones. 

\begin{cor}
    \label{thm:maximal}
    All triples $P_1, P_2, P_3 \subset \R^3$ of $3$-dimensional
    lattice polytopes of mixed degree one of types (i) --(iii) from
    Theorem~\ref{thm:classification3d} are, up to equivalence, contained in one of the following 
    $6$ maximal triples:
    \begin{enumerate}[label=(\alph*)]
        \item the maximal triple $\Pm(2\Delta_2),\Pm(2\Delta_2),\Pm(2\Delta_2)$,
        \item the maximal triple $2 \Delta_3,\Delta_3,\Delta_3$,
        \item the maximal triple $\{\conv(\bzero,2\be_i,\be_j,\be_k) \colon i,j,k \in [3]
        \text{ pairwise different}\}$,
        \item the maximal triple
        \label{itm:max_triple_picture}
		\begin{align*}		        
        &\conv(\be_1,\be_2,-\be_2)*\conv(\bzero,\be_1), \\ 
        &\conv(\bzero,\be_1,-\be_2) * \{-\be_2\}, \\
        &\conv(\bzero,\be_1,\be_2)*\{\be_2\},
        \end{align*}
		\item the maximal triple
		\begin{align*}
		&\conv(\bzero,2\be_2) * \conv(\bzero,\be_1), \\
		&\conv(\bzero,-\be_1,-\be_1-\be_2) * \{-\be_1-2\be_2\}, \\
		 &\conv(\bzero,\be_2,-\be_1) * \{\be_1\},
		\end{align*}
		\item the maximal triple
		\begin{align*}
		&\conv(\bzero,2\be_2) * \conv(\bzero,\be_1), \\
		&\conv(\bzero,-\be_1,-\be_1-\be_2) * \{-\be_1-2\be_2\}, \\
		 &\conv(\bzero,-\be_2,-\be_1)*\{\be_1-2\be_2\}.
		\end{align*}
	\end{enumerate}    
\end{cor}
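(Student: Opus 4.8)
The plan is to verify the claim by a finite computation, building directly on the explicit classification already obtained. By the three propositions establishing parts (i)--(iii) of Theorem~\ref{thm:classification3d}, we have in hand the complete list $\mathcal{L}$ of the $252 = 29 + 141 + 82$ equivalence classes of triples of mixed degree one with at least one exceptional subpair, each represented by explicit lattice coordinates. Since the six candidate triples $(M_1,M_2,M_3)$ in \emph{(a)}--\emph{(f)} are likewise given explicitly, it suffices to produce, for each of them, the set of equivalence classes of its subtriples of mixed degree one, and to check that the union of these six sets contains all of $\mathcal{L}$. Here a \emph{subtriple} of $(M_1,M_2,M_3)$ is a triple $(Q_1,Q_2,Q_3)$ with $Q_i = \conv(S_i)$ for some $S_i \subseteq M_i \cap \Z^3$, and ``contained in'' is understood up to the simultaneous equivalence of triples defined in the introduction.

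First I would set up the enumeration: for each fixed maximal triple I range over the finitely many choices of subsets $S_i$ and test whether $\md(Q_1,Q_2,Q_3)=1$. By Theorem~\ref{thm:bound} this amounts to checking that each of the three pairwise Minkowski sums $Q_i + Q_j$ is hollow while no single $Q_i$ carries an interior lattice point. As a preliminary sanity check one confirms directly that each $(M_1,M_2,M_3)$ itself has mixed degree one and is inclusion-maximal with this property, so that the six triples are legitimate candidates.

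The crucial ingredient is the equivalence test used to compare the surviving subtriples against $\mathcal{L}$. Every triple in $\mathcal{L}$ is of type (i)--(iii), hence none of them admits a common lattice projection of all three polytopes onto translates of $\D_2$, that case being precisely the trivial alternative excluded in Theorem~\ref{thm:classification3d}. Consequently the hypothesis of Lemma~\ref{lem:cayley_criterion} is satisfied for $k=3$, and the equivalence class of such a triple is faithfully encoded by the normal form of its Cayley sum $Q_1 * Q_2 * Q_3 \subset \R^5$. I would therefore discard those subtriples that do commonly project onto $\D_2$ (these cannot lie in $\mathcal{L}$ and fall outside the scope of the Cayley criterion), compute the Cayley normal form of each remaining subtriple of mixed degree one, collect the resulting set of normal forms, and confirm that it contains the Cayley normal form of every one of the $252$ triples in $\mathcal{L}$.

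The main obstacle is computational feasibility rather than any conceptual difficulty: the components of triple \emph{(a)} and the cube-like triples \emph{(d)}--\emph{(f)} can carry on the order of seven or eight lattice points, so a naive enumeration of all $2^{|M_1 \cap \Z^3|}\cdot 2^{|M_2 \cap \Z^3|}\cdot 2^{|M_3 \cap \Z^3|}$ subtriples is expensive. This is kept under control by exploiting the affine automorphism group of each maximal triple to prune symmetric duplicates, by aborting a branch as soon as a pairwise Minkowski sum is found to be non-hollow, and by hashing on the Cayley normal form to recognize coincidences quickly. The whole verification is carried out in Magma, using the code available at the repository linked above.
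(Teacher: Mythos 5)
Your proposal matches the paper's own justification, which is simply that the claim "has been verified computationally by enumerating subtriples of the six special ones," with equivalence of triples detected throughout via the Cayley-sum normal form of Lemma~\ref{lem:cayley_criterion} (applicable exactly because the triples of types (i)--(iii) admit no common lattice projection onto translates of $\D_2$). Your additional remarks on testing $\md\leq 1$ via hollowness of the pairwise Minkowski sums and on pruning the enumeration are correct implementation details consistent with what the paper does.
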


Note that the maximal triples (a) and (b) of
Corollary~\ref{thm:maximal} admit direct generalizations to an arbitrary dimension $n$ that are of mixed degree one. 
Furthermore it follows from the proof of
Theorem~\ref{thm:main} that there are no $n$-tuples of
a type analogous to type (iv) of Theorem~\ref{thm:classification3d}
for $n \geq 4$. A bold guess for an answer to Question~\ref{question} would be that for arbitrary $n \geq 4$ (or 
$n$ large enough) all exceptions of $n$-tuples
of mixed degree one are contained in one of these generalizations, that is 
either in $\Pm^{n-2}(2\Delta_2),\dots,\Pm^{n-2}(2\Delta_2)$ or $2 \D_n, \D_n, \dots, \D_n$ (as it can be easily verified that the 
straightforward generalization of the maximal family (c) to dimension
$n \geq 4$ does not yield $n$-tuples of mixed degree one).

%%%%%%%%%%%%%%%%%%%%%%%%%%%%%%%%%%%%%%%%%%%%%%%%%%%%%%%%%%%%%%%%%%%%%%%%%%%%%%%%
%%%%%%%%%%%%%%%%%%%%%%%%%%%%%%%%%%%%%%%%%%%%%%%%%%%%%%%%%%%%%%%%%%%%%%%%%%%%%%%%

\bibliographystyle{alpha}
\bibliography{mixed_degree}
\end{document}